\newcommand{\R}{\mathbb{R}}
\newcommand{\N}{\mathbb{N}}
\newcommand{\Z}{\mathbb{Z}}
\newcommand{\C}{\mathbb{C}}
\newcommand{\St}{\mathbb{S}^2}
\newcommand{\scp}[2]{\left\langle #1, #2 \right\rangle}
\newcommand{\dd}[2]{\frac{\partial #1}{\partial #2}}
\newcommand{\m}{\boldsymbol{m}}
\newcommand{\eps}{\varepsilon}
\newcommand{\del}{\partial}
\newcommand{\norm}[2]{\left\| #1 \right\|_{ #2 }}
\newcommand{\upper}[2]{ #1^{(#2)}}
\renewcommand{\Re}{{\rm Re}}
\renewcommand{\Im}{{\rm Im}}
\renewcommand{\div}{\mathop{\mathrm{div}}\nolimits}
\newtheorem*{theorem}{Theorem}
\newtheorem{lemma}{Lemma}
\newtheorem{proposition}{Proposition}
\newtheorem*{definition}{Definition}
\newtheorem*{notation}{Notation}
\newtheorem*{remark}{Remark}
\author[C. Melcher]{Christof Melcher}
\address{Department of Mathematics I \\ RWTH aachen University \\52056 aachen \\ Germany}
\email{melcher@rwth-aachen.de}
\title[Global solvability of Landau-Lifshitz-Gilbert]{Global solvability of the Cauchy problem for the Landau-Lifshitz-Gilbert equation in
higher dimensions}
\date{\today}
\begin{document}
\maketitle

\begin{abstract}
We prove existence, uniqueness and asymptotics of global smooth solutions for the Landau-Lifshitz-Gilbert
equation in dimension $n \ge 3$, valid under a smallness condition of initial gradients in the $L^n$ norm. 
The argument is based on the method of moving frames that produces a covariant complex Ginzburg-Landau equation,
and a priori estimates that we obtain by the method of weighted-in-time norms as introduced by Fujita and Kato.
\end{abstract} 

\section*{Introduction and results}
The Landau-Lifshitz-Gilbert equation (LLG) is the fundamental evolution law in magnetism. It governs
the dynamics of continuous spin systems, i.e. of director fields $\m= (0,\infty) \times \R^n \to \mathbb{S}^2$
with values in the unit sphere $\St \subset \R^3$ where typically $n=3$.
In its original form as introduces by Landau and Lifshitz \cite{LandauLifshitz:35, Gilbert:04} the equation reads
\begin{equation}\label{eq:LLG}
\dd{\m}{t} = - \m \times \Delta \m -  \lambda \, \m \times  \m \times \Delta \m 
\end{equation}
where $\lambda>0$ is a damping parameter and $\times$ is the vector product on $\R^3$. Observe that 
$-\m \times  \m \times \Delta \m = \Delta \m +|\nabla \m|^2 \m$, which is sometimes called the tension field.
 Mathematically LLG can be considered as a dissipative version
of the Schr\"odinger flow for harmonic maps into $\St$ (where $\lambda=0$). Emphasizing its parabolic character, LLG
can also be considered as a quasilinear perturbation of the heat flow for harmonic maps
by the (conservative) precession term $-\m \times \Delta \m$. 
A common Liapunov functional is given by the Dirichlet energy
$$E(\m)=\frac{1}{2} \int_{\R^n} |\nabla \m|^2 \, dx.$$
In dimension $n=2$, LLG is energy critical, which means that the scaling symmetry $\m_\eps(t,x)=\m(t/ \eps^2  ,x/ \eps)$
for $\eps >0$ featured by \eqref{eq:LLG} preserves the Dirichlet energy, i.e. $E(\m_\eps(t))=E(\m(t/\eps^2))$ for all $t>0$.
Global existence of weak solutions and uniqueness in the class of energy decreasing solutions
can be shown with arguments parallel to those for the heat flow, see \cite{Guo-Hong:93, Harpes:04, Harpes:06}. 
In contrast, the existence of finite time singularities as known for the heat flow has not been shown conclusively, but there is at least strong numerical evidence, see \cite{Bartels}. 
The possible blow-up cenario, however, is precisely characterized through the development of bubbles at energy concentration points. 
Consequently, initial energies below $4 \pi$, which is the topological lower bound for a full cover of $\St$,
will guarantee global regularity. 
Large energy solutions on the other hand are particularly interesting if topologically nontrivial data, featuring e.g. magnetic vortices, is concerned. 
Recent results show that in the presence of strong potentials of Ginzburg-Landau type, conventional energy bounds can be bypassed 
by certain well-preparedness assumptions on the initial data that prevent the formation of (extra) bubbles, see \cite{Kurzke-Melcher-Moser-Spirn:11, Kurzke-Melcher-Moser:11}.\\
In dimension $n \ge 3$, LLG becomes super-critical with respect to the Dirichlet energy, i.e. with $\m_\eps$ as before $E(\m_\eps(t)) \to 0$ 
as $ \eps \to 0$.
Whereas the proof of existence of weak solutions remains unaffected, see \cite{Alouges_Soyeur:92}, questions of uniqueness and regularity become more delicate. In particular considerations solely based on the energy are insufficient for ruling out concentration effects, and new adapted quantities 
have to come into play. More advanced features of the harmonic map heat flow equation, such as 
Bochner identities and monotonicity formulas giving rise to crucial a priori estimates, see \cite{Struwe:88}, are not at hand for LLG.
In dimension $n=3,4$ perturbative methods based on elliptic problems on suitable time slices provide local regularity conditions for LLG
in terms of certain scaling invariant space-time norms of Morrey-type, see \cite{Moser:book, Melcher:05, Wang:06}. This auxiliary
approach leads to partial regularity results parallel to those for the harmonic map heat flow in \cite{Struwe:88, Chen_Struwe:89}, 
but it fails to apply in higher dimensions $n>4$. Hence the investigation of more flexible and conceptually more adapted approaches 
would be desirable. In this note we establish a relationship between LLG and covariant complex Ginzburg-Landau equations. 
This observation is inspired by recent developments in the context of Schr\"odinger maps. 
It enables us to prove global existence, uniqueness and regularity in arbitrary 
dimensions $n \ge 3$ under a smallness condition of initial data with respect to the scaling 
invariant homogoneneous $\dot{W}^{1,n}$-Sobolev norm. Here is our main results:

\begin{theorem} \label{thm:2}Suppose $\lambda>0$ and $n \ge 3$. Then there exist constants $\rho>0$ and $c>0$
with the following property:
Given  $\m_\infty \in \St$ and initial data $ \m_0 : \R^n \to \St$ such that 
$$\m_0-\m_\infty \in H^1 \cap W^{1,n} (\R^n;\R^3)$$
and such that 
$$ \| \nabla \m_0 \|_{L^n} < \rho,$$
then there exists a global smooth solution $$\m:(0,\infty) \times \R^n \to \St$$
for the Landau-Lifshitz-Gilbert equation \eqref{eq:LLG} with the properties that
$$ \sup_{t>0} \|  \m(t) - \m_\infty \|_{H^1} \le  \|  \m_0 - \m_\infty\|_{H^1}$$
and
$$  \sup_{t>0} \sqrt{t}\  \| \nabla \m(t) \|_{L^\infty} +  \sup_{t>0} \| \nabla \m(t) \|_{L^n} \le c \; \| \nabla \m_0\|_{L^n}$$
and such that 
$$\lim_{t \searrow 0}\left( \m(t) -\m_0 \right)=0 \quad \text{strongly in} \quad H^1 \cap W^{1,n}(\R^n;\R^3).$$

The solution is unique in its class and satifies $\displaystyle{\lim_{t \to \infty} \m(t) = \m_\infty}$ in $C^1(\R^n;\R^3)$.\\

If in addition, $ \m_0 - \m _\infty \in H^{\sigma}(\R^n;\R^3)$ 
for some integer  $\sigma > \frac{n}{2}+1$,
then $$\m-\m_\infty \in C^0\left( [0,\infty);H^{\sigma}(\R^n;\R^3) \right) \cap C^0\left( (0,\infty);H^{\infty}(\R^n;\R^3) \right).$$ 
\end{theorem}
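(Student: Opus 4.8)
The plan is to reduce \eqref{eq:LLG} to a scalar covariant complex Ginzburg--Landau equation via the method of moving frames, and then to run a Fujita--Kato fixed point argument in a scaling-invariant space built from the $L^n$ and $\sqrt{t}\,L^\infty$ norms of the gradient. First I would exploit the smallness of $\|\nabla \m_0\|_{L^n}$: together with $\m_0-\m_\infty\in H^1\cap W^{1,n}$ it confines the image of $\m_0$ to a small geodesic ball around $\m_\infty\in\St$, so the pullback bundle $\m_0^\ast T\St$ is trivial and carries a smooth orthonormal section $\{e_1,e_2\}$; I would propagate it in time and impose the Coulomb gauge $\div a=0$ on the induced $U(1)$ connection $a_k=\scp{\del_k e_1}{e_2}$. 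Encoding the geometry in the $\C$-valued gauged gradient $q_k=\scp{\del_k\m}{e_1}+i\scp{\del_k\m}{e_2}$ (so that $|q|=|\nabla\m|$ pointwise, hence $\|q(t)\|_{L^p}=\|\nabla\m(t)\|_{L^p}$), the structure equations identify the curvature $\del_k a_j-\del_j a_k$ with the pulled-back area form, a pointwise quadratic expression in $q$; combined with $\div a=0$ this gives $a=|\nabla|^{-1}\mathcal{B}(q,q)$ for a bounded bilinear operator $\mathcal{B}$ of order $-1$, and likewise the temporal gauge potential $a_0$ is a bounded (order-zero) bilinear function of $q$ obtained by solving an elliptic equation in space.

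Next I would derive the evolution equation for $q$. Writing $\del_t\m$ in the frame and using $\del_k\del_t\m=\del_t\del_k\m$, with $D_j=\del_j+ia_j$ and $D_t=\del_t+ia_0$, one obtains a covariant Ginzburg--Landau equation $D_t q_k=(\lambda-i)(D_jD_j q_k+F_{kj}q_j)$, whose diffusion coefficient $\lambda-i$ has positive real part $\lambda$. Unfolding the covariant derivatives in the Coulomb gauge turns this into
\[
\del_t q_k=(\lambda-i)\Delta q_k+\del_j\big(\mathcal{F}_1(q)\big)+\mathcal{F}_2(q),
\]
where $\mathcal{F}_1(q)\sim a\,q\sim|\nabla|^{-1}\mathcal{B}(q,q)\cdot q$ is cubic and carries no derivative of $q$ (here I have used $\div a=0$ to rewrite the transport term $a_j\del_j q_k$ as the total derivative $\del_j(a_j q_k)$, exactly as in Kato's treatment of Navier--Stokes), and $\mathcal{F}_2(q)$ collects the genuinely cubic curvature and temporal-gauge terms $F_{kj}q_j$, $a_0 q_k$ together with the quintic term $|a|^2q_k$; every nonlinearity is scaling critical. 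In Duhamel form, with the bounded analytic semigroup $S(t)=e^{(\lambda-i)t\Delta}$ on $L^p$ (which satisfies the same $L^p$--$L^q$ smoothing bounds as the heat semigroup, up to constants depending on $\lambda$),
\[
q(t)=S(t)q_0+\int_0^t S(t-s)\Big(\del_j\mathcal{F}_1(q)(s)+\mathcal{F}_2(q)(s)\Big)\,ds .
\]

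Then I would close the argument in the scaling-invariant space $X$ with norm $\|q\|_X=\sup_{t>0}\|q(t)\|_{L^n}+\sup_{t>0}t^{1/2}\|q(t)\|_{L^\infty}$ (while additionally tracking the $L^2$ norm of $q$, i.e.\ the Dirichlet energy). The $L^n\!\to\!L^\infty$ smoothing of $S$ gives $\|S(\cdot)q_0\|_X\le C\|q_0\|_{L^n}=C\|\nabla\m_0\|_{L^n}$, so $\|\nabla\m_0\|_{L^n}<\rho$ makes the linear part small in $X$; bilinear estimates for $\mathcal{B}$ together with H\"older's inequality and the smoothing of $S$ (absorbing the extra derivative $\del_j$ into $S(t-s)$) show the Duhamel map is a contraction on a small ball of $X$. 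This produces a unique $q\in X$ with $\|q\|_X\le c\|\nabla\m_0\|_{L^n}$, hence the asserted $L^n$ and $\sqrt{t}\,L^\infty$ bounds on $\nabla\m$ and, by parabolic bootstrapping from the Duhamel formula, smoothness on $(0,\infty)\times\R^n$ and the decay $\|\nabla\m(t)\|_{L^\infty}=O(t^{-1/2})\to0$. Reconstructing $\m$ by integrating the frame system — legitimate because the frame compatibility equations are propagated by the flow, and $\m_0-\m_\infty\in H^1\cap W^{1,n}$ pins down the value $\m_\infty$ at spatial infinity — I would obtain the $H^1$ bound from the LLG energy identity $\tfrac{d}{dt}E(\m)=-\lambda\,\|\Delta\m+|\nabla\m|^2\m\|_{L^2}^2\le0$ and a parallel monotonicity of $\|\m(t)-\m_\infty\|_{L^2}^2$ (valid while $\m$ stays close to $\m_\infty$), strong continuity at $t\searrow0$ from strong continuity of $S$ on $L^2\cap L^n$ and the estimates on the Duhamel term, and $C^1$-convergence to $\m_\infty$ from the decay estimates; uniqueness in the class follows from uniqueness of the small mild solution $q$. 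Finally, for $\m_0-\m_\infty\in H^\sigma$ with $\sigma>\tfrac{n}{2}+1$ I would propagate $H^\sigma$ by standard tame energy estimates (the nonlinearities are polynomial and $H^\sigma\hookrightarrow C^1$) and use parabolic smoothing to reach $H^\infty$ for positive times.

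The main obstacle I expect is the pair of interlocking difficulties at the core of the reduction: first, proving the bilinear and trilinear estimates for the nonlocal connection $a=|\nabla|^{-1}\mathcal{B}(q,q)$ (and for $a_0$) in the critical space $X$ and correctly handling the transport term $a\cdot\nabla q$ — the device of writing it as $\div(a\,q)$ and absorbing the derivative into the semigroup is what keeps the iteration closed at critical regularity; and second, the reconstruction step, namely verifying that the abstract solution $q$ of the gauged equation genuinely arises as the gauged gradient of an $\St$-valued solution $\m$ of \eqref{eq:LLG} with the stated decay and the correct limit at spatial infinity, which rests on showing that the frame compatibility conditions, true at $t=0$, are preserved by the evolution.
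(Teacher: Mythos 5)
Your overall architecture — moving frame, Coulomb gauge, Duhamel, Fujita--Kato weighted-in-time norms — is the same as the paper's, but the logical route and two technical claims differ in ways that matter.

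\textbf{Different route.} The paper never sets up a contraction mapping for $q$ and never needs to reconstruct $\m$ from a solved-for $q$. It constructs a local classical solution $\m$ of LLG directly, by standard quasilinear parabolic theory in $H^\sigma_\ast$ (Proposition~1), derives the covariant frame quantities $u,a$ \emph{from} this $\m$, proves the weighted a priori estimates for $u$, transfers them back to $\nabla\m$ through $|\nabla\m|=|u|$, and then uses the persistence criterion from Proposition~1 to continue $\m$ for all time. Your scheme instead solves the gauged equation by fixed point and then integrates the frame to recover $\m$. That reconstruction step is precisely where hard work is required (as you correctly flag), and the paper sidesteps it entirely. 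Neither route is "wrong," but you are trading the well-understood quasilinear local theory for a nontrivial integrability/compatibility problem.

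\textbf{A genuine error regarding $a_0$.} You claim "the temporal gauge potential $a_0$ is a bounded (order-zero) bilinear function of $q$." That is true only for Schr\"odinger maps ($\lambda=0$): zero torsion then lets one cancel the derivative and get $-\Delta a_0=\div\div(\dots)$, so Calder\'on--Zygmund gives $\|a_0\|_{L^p}\lesssim\|u\|_{L^{2p}}^2$. For LLG ($\lambda>0$) one has $u_0=(\lambda-i)\sum_k D_ku_k$, and the $\lambda$-part produces the real piece $\lambda\,\Im(\bar u_k D_\ell u_\ell)$ which cannot be converted to a double divergence by the torsion identity; its real-gradient part $\frac12\del_k|u_\ell|^2$ is missing. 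The paper therefore decomposes $a_0=a_0^{(1)}+a_0^{(2)}$ with $a_0^{(1)}$ genuinely first order in $u$, satisfying $\|a_0^{(1)}\|_{L^{n/\delta}}\lesssim\|u\|_{L^{n/\delta}}\|\nabla u\|_{L^n}$ (Lemma~3). Consequently the iteration cannot close in your $X=\sup_t\|q(t)\|_{L^n}+\sup_t t^{1/2}\|q(t)\|_{L^\infty}$ without also tracking a gradient: the paper runs the argument with the pair $K(t)=\sup_\tau \tau^{\frac{1-\delta}{2}}\|u(\tau)\|_{L^{n/\delta}}$ and $K'(t)=\sup_\tau\tau^{1/2}\|\nabla u(\tau)\|_{L^n}$ for $\delta\in(\frac35,\frac23)$. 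You should add $\sqrt t\,\|\nabla q\|_{L^n}$ to $X$. (The $\div(a\,q)$ rewriting you propose for the transport term handles \emph{that} term, but not $a_0^{(1)}q$, where the derivative is buried inside the nonlocal $a_0^{(1)}$ and cannot be moved onto the semigroup.)

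\textbf{A smaller incorrect step.} Smallness of $\|\nabla\m_0\|_{L^n}$ does not confine the image of $\m_0$ to a small geodesic ball on $\St$: $W^{1,n}(\R^n)$ only controls BMO, not $L^\infty$. This is not needed anyway — the paper explicitly emphasizes that the moving-frame construction (borrowed from Bejenaru et al.) is global and requires no small-deviation hypothesis; the frame exists by a topological argument for any $\St$-valued map tending to $\m_\infty$ at infinity.

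With those corrections — add the gradient norm to $X$, drop the claim that $a_0$ is order-zero, and either keep the paper's "local solution first, then a priori estimates" structure or work out the frame-reconstruction step carefully — your plan would track the paper's method closely.
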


%

Concerning local solutions, existence and uniqueness for $H^\sigma$-regular initial data (with $\sigma$ as in the Theorem)
has been shown in \cite{Ding_Wang:01, Kenig:10} in more generality. Observe that in particular 
$H^\sigma(\R^n) \hookrightarrow {\rm Lip}(\R^n)$, and local Lipschitz bounds play indeed a crucial role in constructing classical solutions for
strongly parabolic systems of that type.
In the context of heat flows local Lipschitz bounds are provided by the Bochner identity in conjunction with the maximum principle, and it is
possible to prove local existence and uniqueness results under weaker assumptions.\\
The main point in extending smooth solutions or proving global regularity of weak solutions is to show that the smallness condition in 
$\dot{W}^{1,n}(\R^n)$ implies a Lipschitz bound. In the context of heat flows into an arbitrary compact Riemannian manifold, a global regularity result similar to our 
Theorem has been proven in \cite{Struwe:88} Theorem 7.1
assuming a global Lipschitz bound and a corresponding smallness in energy condition on initial data. \\
Recall that smoothness and a separate smallness in energy condition on initial data alone is not enough to ensure global regularity,
as has been shown for LLG at least in dimension $n=3,4$ in \cite{Ding_Wang:07}. 
In the context of the Theorem it would be interesting to investigate whether a bound on the gradient $\nabla \m$ in
$L^\infty((0,T); L^n(\R^n))$ implies regularity of an arbitrary weak solution $\m$. In case of heat flows this is a
recent result, see \cite{Wang:08}, and motivated by parallel results for
the Navier-Stokes equation, see \cite{Sverak:03}. The argument, however, crucially rests on specific features of
the heat flow equation.

%

%
%
%

In the case of Schr\"odinger maps, i.e. for $\lambda=0$, the parabolicity along with its characterisitc local smoothing properties
degenerates. Recent global existence and regularity results that rest on extensive arguments from Fourier analysis hold true 
under a smallness condition in terms of critical Sobolev norm $\dot{H}^\frac{n}{2}(\R^n)$, see  \cite{Bejenaru:07, Bejenaru:08}, rather than the 
weaker $\dot{W}^{1,n}(\R^n)$ norm. Thus, in contrast to the present result for LLG, results for Schr\"odinger maps in higher dimensions
take into account higher order (fractional) derivatives, 
similar to the case of wave maps, see \cite{Shatah-Struwe:02, Tao:04}.

The crucial point and common feature in the proof for LLG and for Schr\"odinger maps is a canonical choice of coordinates on the tangent
bundle (rather than on the target manifold itself)
called the method of moving frames. This method has several advantages. First, it reveals the essentially cubic 
structure of the geometric nonlinearity. More importantly in the context of LLG, however,
it provides a linearization of the quasilinear hybrid-structure and turns the equation
into a semilinear (though nonlocal) complex Ginzburg-Landau equation. Finally, the method is global, and
no small deviation assumption as for the stereographic model is required.
%
The strategy of proof is to start from smooth initial data and local classical solutions that can be constructed by means of general methods for
quasilinear parabolic systems. Crucial higher order energy estimates will be reviewed in Section 1, where we also address the issue of uniqueness.
In Section 2 we shall briefly discuss the concept of moving frames. We derive a covariant version of LLG giving rise to a covariant complex Ginzburg-Landau equation in terms
of a complex vector field $u$ that represents the spatial gradient of $\m$ and a space-time connection field ${\bf a}=(a_0,a)$ that 
represents a covariant  derivative. Using the Coulomb gauge $\div a =0$ we shall deduce some preliminary 
estimates for the nonlinearities coming out of this process. In Section 3 we shall derive corresponding a priori estimate for $u(t)$
in the $L^n(\R^n)$ and $L^\infty(\R^n)$ norm
that hold true under a smallness condition on $\|u(0)\|_{L^n}$. Whereas a priori estimates for Sch\"odinger maps mainly rely on 
sophisticated Paley-Littlewood decompositions and Strichartz estimates for dispersive equation, 
see  \cite{Bejenaru:07, Bejenaru:08},  
a priori estimates for LLG will rest on methods from semilinear parabolic systems
and in particular the usage of weighted-in-time Lebesgue-Sobolev spaces as introduced by Fujita and Kato in the context of the 
Navier-Stokes equation, see \cite{Fujita_Kato:64, Kato:84}.
Accordingly, we obtain in the first instance uniform estimates for quantities $t \mapsto t^\frac{1-\delta}{2} \| u(t) \|_{L^\frac{n}{\delta}}$
for certain $\delta < 1$
and $t \mapsto \sqrt{t} \| \nabla u(t) \|_{L^{n}}$ featuring the appropriate scaling in space-time but improved regularity in space.
By Duhamel's principle, these quantities can be 
controlled by $\|u(0)\|_{L^n}$, and we use them to obtain 
$$ \sqrt{t} \; \|u(t)\|_{L^\infty} + \| u(t) \|_{L^n} \le c \, \|u(0)\|_{L^n}$$
with a universal constant $c$ and independently of $t$.
Transferring this estimate back into LLG  in Section 4
provides in particular a Lipschitz bound for $\m$, which enables us to extend the local classical solutions $\m$ uniquely to all times and, by approximation, to prove regularity of weak solutions.   
 
\section{Local existence and uniqueness of classical solutions}
Local smooth solutions for LLG have been used in \cite{Ding_Wang:01} with the aim of 
constructing Schr\"odinger maps in the limit  $\lambda \searrow 0$. More recently, an alternative approach based
on fourth order parabolic approximation has been presented in \cite{Kenig:10}. The approaches work in the context
of more general target manifolds equipped with a K\"ahler structure. Here we shall restrict to the case of $\St$ valued maps 
where local classical solutions can be constructed extrinsically by means of standard methods for quasilinear parabolic systems. 
Our main point is the fact that classical solutions persist as long as spatial gradients  remain bounded.\\

We shall fix constant values for $\m$ near infinity, i.e. 
$\m(\infty)= \m_\infty$ for some fixed $\m_\infty \in \St$.  For this purpose we define, for $\sigma \in \N$, the complete metric spaces
$$ H^\sigma_\ast(\R^n)=H^\sigma_\ast(\R^n; \St)=\{ \m : \R^n \to \St : \m- \m_\infty   \in H^{\sigma}(\R^n;\R^3)\},$$
where $H^\sigma(\R^n)=W^{\sigma,2}(\R^n)$ the usual Sobolev space. The distance between two elemensts $\m_1$ and $\m_2$ in  
$H^\sigma_\ast(\R^n)$ is given by $\|\m_1 -\m_2\|_{H^{\sigma }}$. We also define the space 
$$H^\infty_\ast(\R^n; \St)= \bigcap_{\sigma \in \N} H^\sigma_\ast(\R^n; \St)$$
which is modeled on the Fr\'echet space $H^\infty(\R^n)$ equipped with the usual metric. \\


For initial data $\m_0 \in H^\sigma _\ast$ and $\sigma$ sufficiently large we shall briefly discuss short time and global existence and 
uniqueness of classical solutions.  In view of the vector identities
$$ \m \times \Delta \m =  \nabla \cdot ( \m \times \nabla \m)$$
and 
$$- \m \times \m \times \Delta \m =  \Delta \m +|\nabla \m|^2 \m$$
 valid for $\m \in C^2(\R^n;\St)$,  equation \eqref{eq:LLG} can equivalently be written as 
\begin{equation}\label{eq:LLG2}
\dd{\m}{t} = \lambda \left( \Delta \m + |\nabla \m|^2 \m \right)  - \m \times \Delta \m
\end{equation}
which is a quasilinear parabolic system. Taking into account 
$$\m \times \Delta \m =  \nabla \cdot (\m \times \nabla \m)  $$
we observe that its principle part is in divergence form and induces a concept of weak solutions:
\begin{definition}
We say  
$\m \in L^\infty((0,T); H^1_\ast (\R^n;\St))$ with  $\dd{\m}{t} \in L^2((0,T); L^2(\R^n;\R^3))$
is a weak solutions of Landau-Lifshitz-Gilbert in $(0,T)$ if  
$$\left\langle \dd{\m}{t}, \Phi \right\rangle_{L^2} + \lambda   \langle \nabla \m , \nabla \Phi \rangle_{L^2}  
=  \langle \m \times \nabla \m , \nabla \Phi \rangle_{L^2}  + \lambda \left\langle |\nabla \m|^2 \m, \Phi \right\rangle_{L^2}  $$
for every $ \Phi \in H^1\cap L^\infty (\R^n;\R^3)$ and almost every $t \in (0,T)$.
\end{definition}
It is useful to take into account several variants of LLG. Vector multiplication of \eqref{eq:LLG2} by $\m$ from the left
and adding a multiple of \eqref{eq:LLG2} yields
\begin{equation}\label{eq:LLG3}
 \lambda \, \dd{\m}{t}  +  \m \times \dd{\m}{t} = (1+ \lambda^2) \left( \Delta \m + |\nabla \m|^2 \m \right)
\end{equation}
from which one can easily deduce the energy (conservation) law 
\begin{equation}\label{eq:energy_conservation}
(1+\lambda^2) \, \frac{d}{dt} E(\m(t)) + \lambda \int_{\R^n} \left|\dd{\m(t)}{t} \right|^2 \, dx =0
\end{equation}
valid for sufficiently regular solutions, hence 
\begin{equation}\label{eq:energy_inequality}
 E(\m(t)) - E(\m(0))=  - \frac{\lambda}{1+\lambda^2} \int_0^t \int_{\R^n} \left|\dd{\m}{t} \right|^2 \, dx \, ds.  
\end{equation}
The corresponding a priori estimates are sufficient for constructing global weak solutions, for which \eqref{eq:energy_inequality}
turns into an inequality, see \cite{Alouges_Soyeur:92}. Higher order energy estimates, however, are necessary for proving short time existence of classical solutions:

\begin{proposition} \label{prop:local_classic}
Suppose 
$\sigma > \frac{n}{2}+1$ is an integer and $\m_0 \in H^\sigma_\ast(\R^n;\St)$. Then there exists a time
$T>0$ and a classical solution $\m:(0,T) \times \R^n \to \St$
of  the Landau-Lifshitz-Gilbert equation \eqref{eq:LLG} with initial data $\m_0$ such that 
$$ \m \in C^0\left([0,T); H^\sigma_\ast (\R^n)\right) \quad \text{and} \quad 
 \dd{\m}{t}  \in C^0\left([0,T); H^{\sigma-2}(\R^n)\right).$$
The solution persists as long as  $\|\nabla \m(t)\|_{L^\infty} $ remains finite.
\end{proposition}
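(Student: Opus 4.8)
The plan is to treat \eqref{eq:LLG2} as a strongly parabolic system on the affine space $\m_\infty + H^\sigma(\R^n;\R^3)$, construct local solutions by a standard linearization scheme, and then promote local existence to the stated continuation criterion by higher-order energy estimates; the one step that is not routine is the top-order treatment of the gyroscopic term $\m\times\Delta\m$. Writing $\m = \m_\infty + v$ with $v \in H^\sigma(\R^n;\R^3)$, equation \eqref{eq:LLG2} becomes
\[
\partial_t v = A(\m)\,\Delta v + \lambda\,|\nabla v|^2\,(v+\m_\infty), \qquad A(\m)\,w := \lambda\,w - \m\times w,
\]
a quasilinear system with coefficients and inhomogeneity polynomial in $(v,\nabla v)$. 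Two structural facts will be used throughout. First, $w\mapsto\m\times w$ is skew-symmetric, so the symmetric part of $A(\m)$ is exactly $\lambda\,\mathrm{Id}$; hence the system is uniformly strongly parabolic with ellipticity constant $\lambda$, independently of $\m$. Second, the sphere constraint propagates: if $\m$ solves \eqref{eq:LLG2} then $\phi := |\m|^2-1$ obeys the \emph{homogeneous linear} parabolic equation $\partial_t\phi = \lambda\Delta\phi + 2\lambda|\nabla\m|^2\phi$ with $\phi(0)=0$, whence $\phi\equiv 0$; so it suffices to solve \eqref{eq:LLG2} as an $\R^3$-valued system and the solution is automatically $\St$-valued. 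Since $\sigma>\tfrac n2+1$ one also has the embedding $H^\sigma(\R^n)\hookrightarrow W^{1,\infty}(\R^n)$.

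For existence I would use successive linearization: put $\m^{(0)}\equiv\m_0$ and let $\m^{(k+1)}$ solve the linear, uniformly parabolic Cauchy problem $\partial_t\m^{(k+1)}=A(\m^{(k)})\Delta\m^{(k+1)}+\lambda|\nabla\m^{(k)}|^2\m^{(k)}$, $\m^{(k+1)}(0)=\m_0$. By $L^2$-based linear parabolic theory each $\m^{(k+1)}-\m_\infty$ lies in $C^0([0,T];H^\sigma)\cap L^2(0,T;H^{\sigma+1})$; the a priori estimate below (applied to the linearized problem) keeps the sequence in a fixed closed ball of $C^0([0,T];H^\sigma_\ast)$ for a short time $T=T(\|\m_0-\m_\infty\|_{H^\sigma})$ and gives a contraction in the norm of $C^0([0,T];H^{\sigma-1})$. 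Passing to the limit produces a solution with $\m\in C^0([0,T);H^\sigma_\ast)$ (the strong time-continuity in $H^\sigma$ being obtained by the usual approximation/energy argument), and then $\partial_t\m\in C^0([0,T);H^{\sigma-2})$ directly from the equation; a parabolic bootstrap upgrades $\m$ to a $C^\infty$, hence classical, solution for $t>0$. Alternatively one may invoke a general existence theorem for quasilinear parabolic systems.

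The heart of the matter is the higher-order energy estimate. For $|\alpha|\le\sigma$, apply $\partial^\alpha$ to \eqref{eq:LLG2} and test with $\partial^\alpha v$ in $L^2(\R^n)$. Splitting $\partial^\alpha(A(\m)\Delta\m)=A(\m)\Delta\partial^\alpha v+[\partial^\alpha,A(\m)]\Delta\m$ and integrating the first piece by parts once, its $L^2$-pairing with $\partial^\alpha v$ equals $-\lambda\|\nabla\partial^\alpha v\|_{L^2}^2$ up to a term $\lesssim\|\nabla\m\|_{L^\infty}\|\nabla\partial^\alpha v\|_{L^2}\|\partial^\alpha v\|_{L^2}$ --- crucially, the skew part of $A(\m)$ contributes nothing to the coercive term. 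The commutator $[\partial^\alpha,A(\m)]\Delta\m$ has effectively lost one derivative, and in $\lambda\partial^\alpha(|\nabla v|^2(v+\m_\infty))$ the only order-$(\sigma+1)$ constituent is again paired against $\nabla\partial^\alpha v$; Moser and Kato--Ponce product/commutator inequalities, together with Gagliardo--Nirenberg interpolation, then bound all remaining terms by $\epsilon\|\nabla\partial^\alpha v\|_{L^2}^2$, to be absorbed into the dissipation, plus $C(\|v\|_{W^{1,\infty}})\|v\|_{H^\sigma}^2$. Summing over $\alpha$ yields $\frac{d}{dt}\|v(t)\|_{H^\sigma}^2\le C(\|v(t)\|_{W^{1,\infty}})\|v(t)\|_{H^\sigma}^2$, a Riccati-type inequality which, via $H^\sigma\hookrightarrow W^{1,\infty}$, furnishes the short-time bounds used above. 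For the continuation statement, note that $|\m|\equiv1$ forces $\|v(t)\|_{L^\infty}\le 2$, so the right-hand side is bounded by $C(\|\nabla\m(t)\|_{L^\infty})\|v(t)\|_{H^\sigma}^2$; hence if $T^\ast$ is the maximal existence time and $\sup_{0\le t<T^\ast}\|\nabla\m(t)\|_{L^\infty}<\infty$, Gronwall keeps $\|v(t)\|_{H^\sigma}$ bounded up to $T^\ast$ --- together with $\partial_t v$ bounded in $H^{\sigma-2}$ this gives a limit $\m(T^\ast)\in H^\sigma_\ast$, from which the solution is continued past $T^\ast$, contradicting maximality. Thus the solution persists as long as $\|\nabla\m(t)\|_{L^\infty}$ is finite.

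The main obstacle is precisely this last estimate. The gyroscopic term $\m\times\Delta\m$ has the same differential order \emph{and} the same order of magnitude as the dissipative term $\lambda\Delta\m$, so it is not a lower-order perturbation of the heat operator, and the naive splitting into the heat flow plus a semilinear perturbation does not close the energy inequality; the remedy --- which is also what makes the estimate depend only on $\|\nabla\m\|_{L^\infty}$ rather than on $\|\Delta\m\|_{L^\infty}$ --- is to keep $\m\times\Delta\m$ inside the quasilinear principal operator $A(\m)\Delta$, where skew-symmetry renders it invisible to the coercive pairing $\int_{\R^n}A(\m)\nabla_j\partial^\alpha v\cdot\nabla_j\partial^\alpha v\,dx$. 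The same structure, combined with the divergence form $\m\times\Delta\m=\div(\m\times\nabla\m)$, also yields uniqueness: for two solutions with the same data, the difference $w=\m_1-\m_2$ solves $\partial_t w=A(\m_1)\Delta w-w\times\Delta\m_2+\lambda(|\nabla\m_1|^2\m_1-|\nabla\m_2|^2\m_2)$, and, testing with $w$, the term $w\times\Delta\m_2=\partial_j(w\times\partial_j\m_2)-\partial_j w\times\partial_j\m_2$ (summed over $j$) is controlled after one integration by parts using only $\|\nabla\m_2\|_{L^\infty}<\infty$, so that Gronwall forces $w\equiv0$.
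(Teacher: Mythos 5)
Your proposal is correct and follows essentially the paper's route: LLG is treated as a uniformly strongly parabolic quasilinear system on the affine space $\m_\infty + H^\sigma$, the sphere constraint is shown to propagate via a Gronwall argument (your homogeneous linear PDE for $\phi=|\m|^2-1$, tested against $\phi$ in $L^2$, is the paper's computation with $\int(1-|\m|^2)^2\,dx$ in disguise), and the persistence criterion comes from an $H^\sigma$ energy estimate that closes precisely because the skew-symmetry of the precession term makes its top-order pairing vanish against the coercive term. The only differences are cosmetic --- you build the solution by Picard iteration where the paper invokes Taylor's spectral-truncation scheme, and you keep the gyroscopic term in non-divergence form $A(\m)\Delta$ where the paper uses $\m\times\Delta\m=\div(\m\times\nabla\m)$ together with a Moser commutator estimate --- and neither changes the substance of the argument.
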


\begin{remark} The terminal time $T$ can be bounded below in terms of $\| \m_0 - \m_\infty \|_{H^{\sigma_0}}$,
where $\sigma_0$ is the smallest integer greater than $\frac{n}{2}+1$.\\
The result holds true for $\sigma = \infty$. For less regular initial maps 
$\m_0 \in H^\sigma_\ast$ with
$\sigma > \frac{n}{2}+1$ we still have solutions
$$ \m \in C^0\left((0,T); H^\infty_\ast (\R^n)\right)$$
in particluar, solution from Proposition \ref{prop:local_classic} are in $C^\infty((0,T) \times \R^n)$.
\end{remark}

A general strategy for solving quasilinear systems by means of spectral truncation and higher order energy estimates 
has been presented in \cite{Taylor_pseudo} \S 7 or \cite{Taylor_3} Chapter 15 \S 7, respectively.  Since \eqref{eq:LLG}
respects the constraint $|\m|=1$, it will be sufficient to construct a solution in a usual Hilbert space by virtue of the ansatz  
$\m ={\bf \hat{e}}_3 + v$. Obtaining a solution $v$
for some $v \in C^0([0,T); H^\sigma(\R^n))$ we have to show $\m ={\bf \hat{e}}_3 + v$ is $\St$ valued. For this purpose
we consider the function $ \phi(\m) = (1-|\m|^2)^2$
and observe $\phi(\m) \le  2 |v|^2 +|v|^4$ which is integrable in space for all $t \in (0,T)$. A straight forward calculation
and integration by parts show
$$ \frac{d}{dt} \int_{\R^n} \phi(\m(t)) \, dx \le 4 \|\nabla \m(t)\|_{L^\infty}^2   \int_{\R^n} \phi(\m(t)) \, dx$$
for all $t \in (0,T)$, and we deduce from Gronwall's lemma that $\m$ is  indeed $\St$ valued.

\subsection{Higher order energy estimates}
The proof of persistence given an $L^\infty$ bound on the gradient,
is based on the following a priori estimate which can be seen as a higher order version of \eqref{eq:energy_conservation}.

\begin{lemma}
Given any integers $\sigma \ge 2$ there exists a constant $c>0$ that only depends on $\lambda$ and $\sigma$
with the following property:
If $\m  \in  C^0\left((0,T); H^{\infty}_\ast (\R^n)\right)$ is a solution of \eqref{eq:LLG}, then 
\begin{equation}\label{eq:generalized_energy}
\frac{d}{dt} \,  \|\nabla \m(t)\|_{H^{\sigma-1}}^2 +  \lambda \norm{ \nabla \m(t)}{H^\sigma}^2  \le 
  c  \left(1+\| \nabla \m(t)\|_{L^{\infty}}^2\right) \|\nabla \m(t)\|_{H^{\sigma-1}}^2
  \end{equation}
 hence with $\displaystyle{C(t) = c \int_0^t \left(1+\|\nabla \m(s)\|_{L^\infty}^2\right) \, ds}$ we have for all $t \in (0,T)$
\begin{equation}\label{eq:Gronwall}
\|\nabla \m(t)\|_{H^{\sigma-1}}^2  \le e^{C(t)}\; 
\|\nabla \m(0)\|_{H^{\sigma-1}}^2.
\end{equation}
\end{lemma}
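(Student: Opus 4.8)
The plan is to prove \eqref{eq:generalized_energy} by the standard parabolic energy method applied to each spatial derivative of the equation, the one genuinely delicate point being the non-dissipative precession term, and then to deduce \eqref{eq:Gronwall} from Gronwall's lemma. For a multi-index $\alpha$ with $1 \le |\alpha| \le \sigma$ I would apply $D^\alpha$ to the quasilinear form \eqref{eq:LLG2} and take the $L^2$ inner product with $D^\alpha \m$. Using $\scp{D^\alpha\Delta\m}{D^\alpha\m}_{L^2} = -\norm{\nabla D^\alpha\m}{L^2}^2$ and summing over $1 \le |\alpha| \le \sigma$ gives
\begin{equation*}
\tfrac12 \frac{d}{dt}\norm{\nabla\m}{H^{\sigma-1}}^2 + \lambda \sum_{1 \le |\alpha| \le \sigma}\norm{\nabla D^\alpha\m}{L^2}^2 = \sum_{1 \le |\alpha| \le \sigma}\Big(\lambda\,\scp{D^\alpha(|\nabla\m|^2\m)}{D^\alpha\m}_{L^2} - \scp{D^\alpha(\m\times\Delta\m)}{D^\alpha\m}_{L^2}\Big).
\end{equation*}
Since $\sum_{1\le|\alpha|\le\sigma}\norm{\nabla D^\alpha\m}{L^2}^2$ together with $\norm{\nabla\m}{L^2}^2$ is comparable to $\norm{\nabla\m}{H^\sigma}^2$, and $\norm{\nabla\m}{L^2}^2 \le \norm{\nabla\m}{H^{\sigma-1}}^2$, it will be enough to bound the right-hand side by $\tfrac{\lambda}{2}\,\norm{\nabla\m}{H^\sigma}^2$ plus $c\,(1+\norm{\nabla\m}{L^\infty}^2)\norm{\nabla\m}{H^{\sigma-1}}^2$; absorbing the first summand and multiplying by two then produces \eqref{eq:generalized_energy} with a constant $c = c(\lambda,\sigma)$.

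For the precession term a crude Leibniz estimate would fail, since $D^\alpha(\m\times\Delta\m)$ carries as many as $\sigma+1$ derivatives on $\m$ while the precession term supplies no dissipation to absorb them. I would exploit the divergence structure $\m\times\Delta\m = \div(\m\times\nabla\m)$ and integrate by parts,
\begin{equation*}
-\scp{D^\alpha\div(\m\times\nabla\m)}{D^\alpha\m}_{L^2} = \scp{D^\alpha(\m\times\nabla\m)}{\nabla D^\alpha\m}_{L^2} = \sum_{\beta\le\alpha}\binom{\alpha}{\beta}\scp{D^\beta\m\times D^{\alpha-\beta}\nabla\m}{\nabla D^\alpha\m}_{L^2},
\end{equation*}
and observe that the top-order summand $\beta = 0$ equals $\scp{\m\times\nabla D^\alpha\m}{\nabla D^\alpha\m}_{L^2}$, which vanishes pointwise by $(\m\times w)\cdot w = 0$. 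In every remaining summand, $1 \le |\beta| \le |\alpha|$, the factor $D^\beta\m\, D^{\alpha-\beta}\nabla\m$ is a product of two derivatives of $\nabla\m$ of total order $|\alpha| - 1 \le \sigma-1$, so a Gagliardo--Nirenberg--Moser product inequality bounds its $L^2$ norm by $c\,\norm{\nabla\m}{L^\infty}\norm{\nabla\m}{H^{\sigma-1}}$; pairing with $\norm{\nabla D^\alpha\m}{L^2} \le \norm{\nabla\m}{H^\sigma}$ and applying Young's inequality with a small parameter gives a contribution of the desired form. I expect the reduction of the highest-order precession term to lower order, via the divergence form and the algebraic cancellation, to be the main obstacle; the rest is routine.

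For the cubic term I would expand $D^\alpha(|\nabla\m|^2\m)$ by Leibniz into a sum of products of at most three derivatives of $\m$, at least two of them of first order, with orders adding up to $|\alpha|+2$. Pairing each summand with $D^\alpha\m$ (of order $\le\sigma$), using $\norm{\m}{L^\infty} = 1$ and Gagliardo--Nirenberg interpolation of $\nabla\m$ between $L^\infty$ and $H^\sigma$ together with Young's inequality, every contribution is dominated by a small multiple of $\norm{\nabla\m}{H^\sigma}^2$ plus $c\,(1+\norm{\nabla\m}{L^\infty}^2)\norm{\nabla\m}{H^{\sigma-1}}^2$, the term $1+(\cdot)$ absorbing the powers $0$ and $1$ of $\norm{\nabla\m}{L^\infty}$ (via $t \le \tfrac12(1+t^2)$) as well as the $\norm{\nabla\m}{L^2}^2$ left over from the comparison above. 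Summing the per-$\alpha$ estimates, with the Young parameters chosen so that the total multiple of $\norm{\nabla\m}{H^\sigma}^2$ is at most $\tfrac\lambda2$, delivers the bound on the right-hand side required above and hence \eqref{eq:generalized_energy}. Finally, \eqref{eq:Gronwall} follows by discarding the nonnegative term $\lambda\norm{\nabla\m(t)}{H^\sigma}^2$ in \eqref{eq:generalized_energy}, which leaves $\frac{d}{dt}\norm{\nabla\m(t)}{H^{\sigma-1}}^2 \le c\,(1+\norm{\nabla\m(t)}{L^\infty}^2)\,\norm{\nabla\m(t)}{H^{\sigma-1}}^2$, and integrating this differential inequality (Gronwall's lemma with integrating factor $e^{C(t)}$).
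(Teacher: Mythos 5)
Your argument matches the paper's proof essentially step for step: both differentiate the divergence form of \eqref{eq:LLG2}, exploit $\m\times\Delta\m = \div(\m\times\nabla\m)$ so that after integration by parts the top-order precession contribution $\left\langle \m\times\del^\alpha\nabla\m,\,\del^\alpha\nabla\m\right\rangle$ vanishes pointwise, and then control the commutator and cubic remainders by the same Moser/Gagliardo--Nirenberg product estimate and Young's inequality before closing with Gronwall. The only cosmetic difference is that you spell out the Leibniz expansion of the commutator explicitly where the paper groups it as a single term $R$ and quotes the same interpolation inequality.
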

 
 \begin{proof}
Given any multi-index $1 \le |\alpha|\le \sigma-1$ we have
\begin{equation*}
\del^\alpha(\m \times \nabla \m) = \m \times \del^\alpha \nabla \m + R
\quad \text{where} \quad \|R\|_{L^2} \le c \; \|\nabla \m\|_{L^\infty}  \|\nabla \m\|_{H^{\sigma-1}}.
\end{equation*} 
In fact, the commutator term $R$ only contains mixed derivatives and no absolute term $\m$, and we can use
the following interpolation inequality (see \cite{Taylor_3}, p.9)
$$ \| \del^\beta f \;  \del^\gamma g\|_{L^2} \le c \; \norm{f}{L^\infty} \norm{g}{H^{\ell}}+ \norm{g}{L^\infty} \norm{f}{H^{\ell}}
\quad \text{for} \quad |\beta|+|\gamma|=\ell. $$
Moreover, we obtain by a similar argument with a generic constant $c$
\begin{eqnarray*}
\| \del^\alpha \left( |\nabla \m|^2 \m \right)  \|_{L^2} &\le& c \, \| \m\|_{L^\infty} 
 \|\nabla \m\|_{L^\infty} \|\nabla \m\|_{H^{\sigma-1}} +  \|\nabla \m\|_{L^\infty}^2 \|\nabla \m\|_{H^{\sigma-2}} \\ \nonumber
&\le& c \; \left(1+\|\nabla \m\|_{L^\infty}^2\right) \|\nabla \m\|_{H^{\sigma-1}}.
\end{eqnarray*} 
Applying $\del^\alpha$ to \eqref{eq:LLG} and multiplying it by $\del^\alpha \m$ we obtain
upon integration by parts
\begin{equation*}
\begin{split}
\frac{d}{dt} \|\del^\alpha \m\|_{L^2}^2 + 2 \lambda \norm{\del^\alpha \nabla \m}{L^2}^2 \le
  c \, \|\nabla \m\|_{L^\infty} \|\nabla \m\|_{H^{\sigma-1}} \norm{\del^\alpha \nabla \m}{L^2} \\
   +  c   \left(1+\|\nabla \m\|_{L^\infty}^2\right) 
  \|\nabla \m\|_{H^{\sigma-1}}^2.
\end{split}   
\end{equation*}
Then \eqref{eq:generalized_energy} follows from Young's inequality and after summing over all $1 \le |\alpha|  \le \sigma-1$, 
while
\eqref{eq:Gronwall} follows subsequently from Gronwall's inequality.
\end{proof}

A modified version of the Lemma holds true for approximate solutions obtained in a suitable contruction process.  
For $\sigma$ as in Proposition \ref{prop:local_classic} the estimate provides, by virtue of Sobolev 
embedding $H^{\sigma-1}(\R^n) \hookrightarrow L^\infty(\R^n)$, the requisite a priori estimate for
proving local existence. The bound on $\int_0^T \norm{ \nabla \m(t)}{H^\sigma}^2\, dt$ obtained for \eqref{eq:generalized_energy} upon integration in time and 
the uniqueness result below can be used to bootstrap higher regularity estimates in the interior. We refer to \cite{Taylor_pseudo,Taylor_3} for details.

%
%
%

\subsection{Uniqueness and stability}

\begin{lemma}\label{lemma:stability}
There exists a constant $c$ with the following property:
If $\m_1$ and $\m_2$ are classical solutions as in Proposition \ref{prop:local_classic}
on a common time interval $[0,T)$, then 
$$\norm{(\m_1 - \m_2)(t)}{L^2}^2 
       \le \exp \left( c  \sum_{i=1,2}\int_0^t \|\nabla \m_i(s)\|_{L^\infty}^2 \, ds \right)
        \norm{(\m_1 - \m_2)(0)}{L^2}^2$$
for every $t \in (0,T)$.
In particular, solutions in Proposition \ref{prop:local_classic} are unique. 
If $n \ge 3$ there exists a constant $\eta>0$ with the following property:
If $\m_1$ and $\m_2$
are weak solutions in the sense our Definition
in the space $L^\infty((0,T);\dot{W}^{1,n}(\R^n))$ with
$$\sup_{t \in (0,T)} \|\nabla \m_i(t)\|_{L^n} < \eta$$ 
for $i=1,2$, then 
$$\norm{(\m_1- \m_2)(t)}{L^2}^2 + \frac{\lambda}{2} \int_0^t \norm{\nabla ( \m_1- \m_2)(s)}{L^2}^2 \, ds  \le
  \norm{(\m_1 - \m_2)(0)}{L^2}^2$$
for every $t \in (0,T)$.   In particular, weak solutions in this class are unique.
 \end{lemma}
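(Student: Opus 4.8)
The plan is to establish all four assertions by one energy argument applied to the difference $\m:=\m_1-\m_2$. Since $|\m_i|\equiv1$ we have $|\m|\le2$, and since $\m_i-\m_\infty\in H^1$ the difference lies in $H^1(\R^n;\R^3)$; hence $\m$ is an admissible test function, both in the smooth computation and, for the last part, in the weak formulation from our Definition. Writing LLG in the form \eqref{eq:LLG2}, subtracting the equations for $\m_1$ and $\m_2$, pairing the result with $\m$ in $L^2(\R^n)$ and integrating by parts (in the weak case, directly subtracting the two weak formulations tested against $\Phi=\m$), one arrives at
$$\tfrac{1}{2}\,\frac{d}{dt}\|\m(t)\|_{L^2}^2+\lambda\|\nabla\m(t)\|_{L^2}^2=\langle\m_1\times\nabla\m_1-\m_2\times\nabla\m_2,\nabla\m\rangle+\lambda\langle|\nabla\m_1|^2\m_1-|\nabla\m_2|^2\m_2,\m\rangle.$$
In the classical case this identity is justified by smoothness; in the weak case its left-hand side is legitimate by the Lions--Magenes lemma, using $\m\in L^2((0,T);H^1)$ and $\partial_t\m\in L^2((0,T);L^2)$, while the right-hand side makes sense because $\m_i\times\nabla\m_i\in L^2$ and $|\nabla\m_i|^2\m_i\in L^1$ for a.e.\ $t$.

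The next step is to split each nonlinear difference into a part carrying $\nabla\m$ and a part carrying a gradient of $\m_1$ or $\m_2$,
$$\m_1\times\nabla\m_1-\m_2\times\nabla\m_2=\m\times\nabla\m_1+\m_2\times\nabla\m,\qquad|\nabla\m_1|^2\m_1-|\nabla\m_2|^2\m_2=|\nabla\m_1|^2\m+\bigl((\nabla\m_1+\nabla\m_2):\nabla\m\bigr)\m_2.$$
The contribution $\langle\m_2\times\nabla\m,\nabla\m\rangle$ vanishes identically because $(a\times b)\cdot b=0$ pointwise, so only three terms survive. For the classical statement I would bound $\nabla\m_1$ and $\nabla\m_2$ in $L^\infty$, absorb by Young's inequality the single factor $\|\nabla\m\|_{L^2}$ occurring in the cross terms into $\lambda\|\nabla\m\|_{L^2}^2$, reach $\frac{d}{dt}\|\m\|_{L^2}^2\le c\bigl(\|\nabla\m_1\|_{L^\infty}^2+\|\nabla\m_2\|_{L^\infty}^2\bigr)\|\m\|_{L^2}^2$, and conclude by Gronwall; uniqueness is then the case $\m(0)=0$. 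For the weak statement, where $n\ge3$, I would instead invoke the Sobolev embedding $\dot{H}^1(\R^n)\hookrightarrow L^{2n/(n-2)}(\R^n)$ together with $|\m|\le2$: by Hölder with exponents $n$, $2$ and $\tfrac{2n}{n-2}$ each surviving term is controlled by $c\,\bigl(\|\nabla\m_1\|_{L^n}+\|\nabla\m_2\|_{L^n}\bigr)\|\nabla\m\|_{L^2}\,\|\m\|_{L^{2n/(n-2)}}\le c'\bigl(\|\nabla\m_1\|_{L^n}+\|\nabla\m_2\|_{L^n}\bigr)\|\nabla\m\|_{L^2}^2$ (with the square of the $L^n$ norm in the term carrying $|\nabla\m_1|^2$, which a smallness hypothesis reduces to the first power). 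Choosing $\eta>0$ small enough, depending only on $\lambda$, $n$ and the Sobolev constant, that this sum is at most $\tfrac{\lambda}{2}\|\nabla\m\|_{L^2}^2$, these terms are absorbed into $\lambda\|\nabla\m\|_{L^2}^2$ and one is left with $\frac{d}{dt}\|\m\|_{L^2}^2+\frac{\lambda}{2}\|\nabla\m\|_{L^2}^2\le0$; integrating in $t$ yields the asserted inequality and, when $\m(0)=0$, uniqueness in this class.

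The Hölder and Young bookkeeping is routine; the one genuinely delicate term is $\langle\m\times\nabla\m_1,\nabla\m\rangle$ in the weak case. Unlike $\langle\m_2\times\nabla\m,\nabla\m\rangle$ it does not cancel, and it cannot be integrated by parts to trade a derivative, so one must spend the full regularity budget of $\m$ — the $L^{2n/(n-2)}$ Sobolev bound, which is exactly where the hypothesis $n\ge3$ enters — against $\nabla\m_1\in L^n$. This is precisely why a genuine smallness of $\|\nabla\m_i\|_{L^n}$, not merely a bound, is required: otherwise the coefficient of $\|\nabla\m\|_{L^2}^2$ cannot be absorbed, and one could at best hope for a Gronwall-type estimate, which is unavailable in the weak class since $\|\nabla\m_i(t)\|_{L^n}$ is only known to be bounded in time. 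A secondary point to spell out in the write-up is the justification, through the Lions--Magenes regularization lemma, that $t\mapsto\|\m(t)\|_{L^2}^2$ is absolutely continuous with derivative $2\langle\partial_t\m,\m\rangle$, and the check that every nonlinear term in the weak formulation lies in $L^1(\R^n)$ so that the test function $\Phi=\m$ is admissible.
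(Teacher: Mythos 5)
Your argument is correct and matches the paper's own proof essentially step for step: the same decomposition $\m_1\times\nabla\m_1-\m_2\times\nabla\m_2=\Phi\times\nabla\m_1+\m_2\times\nabla\Phi$ and $|\nabla\m_1|^2\m_1-|\nabla\m_2|^2\m_2=|\nabla\m_1|^2\Phi+(\nabla\Phi:\nabla(\m_1+\m_2))\m_2$, the same observation that $\langle\m_2\times\nabla\Phi,\nabla\Phi\rangle=0$, the same $L^\infty$/Young/Gronwall closure for the classical case, and the same Sobolev $\dot{H}^1\hookrightarrow L^{2n/(n-2)}$ plus H\"older absorption under a smallness hypothesis for the weak case. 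The only cosmetic difference is that you pass through a differential inequality before integrating, whereas the paper states the energy identity directly in integral form.
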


\begin{proof}
Observe that by \eqref{eq:LLG2} and with
the notation $\Phi =\m_1-\m_2$ we have
\begin{equation*}
\begin{split}
 \Phi_t = \lambda \left(  \Delta \Phi + |\nabla \m_1|^2 \Phi +  \m_2  \nabla \Phi : \nabla (\m_1+ \m_2) \right)
  - \nabla \cdot( \Phi \times \nabla \m_1 + \m_2 \times \nabla \Phi).
\end{split}
\end{equation*}
Using $2  \Phi$  as a test function yields, for every $t \in (0,T)$
\begin{equation*}
\begin{split}
 \|\Phi (t) \|^2_{L^2}  + 2 \lambda  \int_0^t  \|\nabla \Phi \|_{L^2}^2 \, ds 
 \le  \|\Phi (0) \|^2_{L^2}  + 2\lambda \int_0^t \| \Phi \nabla \m_1\|^2_{L^2}  ds \\
                                                                                 +2 \int_0^t   \Big( \lambda \, \|\Phi \, \nabla(\m_1 + \m_2)\|_{L^2}
                                                                                  +  \|\Phi \, \times \nabla \m_1\|_{L^2}  \Big)\| \nabla \Phi \|_{L^2} \, ds
\end{split}    
\end{equation*}
Now the first claim follows easily from Young's and Gronwall's inequality.\\
For the second claim we also use the Sobolev inequality $\|\Phi\|_{L^\frac{2n}{n-2}} \le c  \|\nabla \Phi\|_{L^2}$ in order 
to replace the $L^\infty$ bounds by $L^n$ bounds on $\nabla \m_i$  leading to
\begin{equation*}
\begin{split}
 \|\Phi (t) \|^2_{L^2}  + \lambda  \int_0^t  \|\nabla \Phi \|_{L^2}^2 \, ds \le   \|\Phi (t) \|^2_{L^2}   +  ( \eta + \eta^2) \, c
\int_0^t  \|\nabla \Phi\|^2_{L^2}\, ds \end{split}    
\end{equation*}
for some constant $c$ that only depends on $\lambda$ and $n$.
Choosing $\eta$ sufficiently small, the second term on the right can be adsorbed, and the claim follows.
 \end{proof}

The Lemma is useful in approximating local solutions, and for the time being we shall assume 
$ \m_0 \in H^\infty_\ast(\R^n; \St).$
Our aim is to show that classical solutions $\m \in C^0([0,T); H^\infty_\ast(\R^n))$ are
global provided $\nabla \m_0$ is sufficiently small in the $L^n$ norm. 
A well-konwn approximation device due to Schoen and Uhlenbeck enables us to extend this result to weak initial data.

\section{Derivation of the covariant Landau-Lifshitz-Gilbert system}
The formulation of harmonic flows using orthonormal frames on the tangent bundle of the target manifold under
a suitable gauge has been successfully used in the context of wave maps (see e.g. \cite{Freire_M_S:97,Shatah-Struwe:02})
and more recently in the context of Schr\"odinger maps (see e.g. \cite{Nahmod:03,Tao:04, Bejenaru:07, Bejenaru:08, Nahmod:09}).
The construction of what we shall call the covariant Landau-Lifshitz-Gilbert system is extrinsic and analog to the case of Schr\"odinger maps into $\St$.
We shall keep our presentation brief and particularly refer to the self-contained presentation in \cite{Bejenaru:07}, Section 2.
We shall see that in these new coordinates the derivative function will solve a complex Ginzburg-Landau
equation. 

\begin{notation} 
Latin indices $k,\ell \in \{1, \dots, n\}$ will be used for spatial components.  
$\nabla$ and $\Delta$ will denote the spatial gradient and Laplace operator, respectively. For $a \in \R^n$ and $u \in \C^n$ we set 
$a \cdot u = \sum_{k=1}^n a_k u_k \in \C$, and
accordingly $\div u= \nabla \cdot u$ will denote the euclidean divergence. Greek indices $\alpha, \beta \in \{0, \dots, n\}$ will be used for space-time components, where $\alpha=0$ is the time index. Complex space-time vectors are denoted by bold letters ${\bf u}=(u_0,u) \in \C^{1+n}$.
Finally we set $\del_\alpha = \dd{}{x_\alpha}$. 
\end{notation}

\subsection{Moving frames} \label{sec:moving}
We consider
$$ \m \in C^0\left([0,T]; H^\infty _\ast(\R^n)\right) \quad \text{and} \quad
      \dd{\m}{t} \in C^0\left([0,T]; H^{\infty}(\R^n)\right).$$
By virtue of a topological construction and a regularization argument, it has been shown in \cite{Bejenaru:07} \S 2 that there exist 
smooth tangent vector fields along $\m$
$$ X,Y :  (0,T) \times \R^n \to  T_{\m} \St$$
such that 
$$ |X|=|Y|=1\quad \text{and} \quad \m = X \times Y,$$ 
that is, the pair $\{X,Y\}$ forms an orthonormal tangent frame along $\m$.
Moreover, the  vector fields $X$ and $Y$ inherit the regularity properties of $\m$, i.e.
$$ \del_\alpha X,   \del_\alpha Y 
 \in C^0\left([0,T]; H^{\infty}(\R^n)\right) \quad \text{for} \quad \alpha \in \{0, \dots, n\}$$
We define the map
$$ {\bf a}=(a_0,a) \in C^0\left([0,T]; H^\infty( \R^n; \R^{1+n}) \right)$$
with coefficients $a_\alpha$ defined by
$$ a_\alpha = \scp{\del_\alpha X}{Y}= - \scp{\del_\alpha Y}{X},$$
where $\langle\cdot, \cdot \rangle$ is the euclidean product on $\R^3$, giving rise to a unitary connection
$$D_\alpha = \del_\alpha+i a_\alpha. $$
Representing the space-time gradient of $\m$ in terms of $X+iY$ we obtain coefficients
\begin{equation}\label{eq:u_comp}
 u_\alpha = \scp{\del_\alpha \m}{X} + i \scp{\del_\alpha \m}{Y}
 \end{equation}
such that
$$\del_\alpha \m= \Re (u_\alpha) \, X + \Im (u_\alpha) \, Y$$
The coefficients $u_\alpha$ form a smooth complex vector function 
$${\bf u}=(u_0,u) \in C^0\left( [0,T];H^\infty( \R^n; \C^{1+n})\right).$$
Taking into account the relations
$$\del_\alpha X  =  -\Re (u_\alpha) \, \m + a_\alpha \,  Y \quad \text{and} \quad \del_\alpha Y = - \Im (u_\alpha) \, \m  - a_\alpha \, X $$
we obtain the \textit{zero torsion identity}  
\begin{equation} \label{eq:zero_torsion}
 D_\alpha u_\beta = D_\beta u_\alpha
 \end{equation}
and the \textit{curvature identity}
\begin{equation}\label{eq:curvature} R_{\alpha \beta} :=   [D_\alpha, D_\beta] 
= i \left( \del_\alpha a_\beta -\del_\beta a_\alpha \right)
=  i \, \Im (  u_\alpha \bar{u}_\beta). 
\end{equation}
\subsection{The covariant LLG system}
Suppose  $T \in (0,T_\ast)$ and 
$$\m \in C^0([0,T_\ast); H^\infty_\ast(\R^n;\St))$$ 
is a solution of LLG with  
$\m(0)=\m_0 \in H^\infty_\ast(\R^n;\St)$
as in Proposition \ref{prop:local_classic}. On $[0,T]$ the construction of a moving frame $\{X,Y\}$ along $\m$ 
from Section \ref{sec:moving} can be carried out. A straight forward calculation shows
$$ \Delta \m = \sum_{k=1}^n\left( \del_k \Re(u_k) - a_k\, \Im(u_k) \right)X +\sum_{k=1}^n \left( \del_k \Im(u_k) + a_k\,  \Re(u_k) \right)Y + |a|^2 \m.$$
Upon multiplication by $X+iY$,  the Landau-Lifshitz-Gilbert equation 
$$  \dd{\m}{t}  = \lambda  \left( \Delta \m + |\nabla \m|^2 \m \right) -\m \times \Delta \m $$
turns into the following system that we shall call the convariant LLG system.

\begin{proposition}\label{prop:covariant}
Suppose $\m$ is a solution of the Landau-Lifshitz-Gilbert equation as in Proposition \ref{prop:local_classic} on an interval $(0,T_\ast)$
with initial data $\m_0 \in H^\infty_\ast(\R^n;\St)$. For $T \in (0,T_\ast)$ and with the notation from Section \ref{sec:moving} 
$$({\bf u},{\bf a}) \in C^0 \left([0,T]; H^\infty( \R^n;\C^{1+n} \times \R^{1+n})\right) $$ solves the
covariant Landau-Lifshitz-Gilbert system:
\begin{equation} \label{eq:covLLG}
\begin{cases}
u_0   = (\lambda - i) \sum_{k=1}^n D_k u_k\\
D_\beta u_\alpha   = D_\alpha u_\beta \\
 \del_\alpha a_\beta - \del_\beta a_\alpha     =    \Im (  u_\alpha \bar{u}_\beta) 
\end{cases} \end{equation}
Hence $u=(u_1,\dots, u_n)$ solves the covariant complex Ginzburg-Landau equation
\begin{equation}\label{eq:covLLG2}
D_0 u_\ell =   (\lambda - i) \sum_{k=1}^n \left( D_k D_k u_\ell + R_{\ell k}  u_k \right)  \quad \text{for} \quad \ell \in \{1, \dots, n\} 
\end{equation}
and attains initial values $u(0) = \langle \nabla \m_0,X \rangle +  i \langle \nabla \m_0,Y \rangle$ in $H^{\infty}(\R^n;\C^n)$.
\end{proposition}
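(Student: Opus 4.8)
The last two equations in \eqref{eq:covLLG} are not new: they are exactly the zero torsion identity \eqref{eq:zero_torsion} and the curvature identity \eqref{eq:curvature}, which hold for the frame $\{X,Y\}$ attached to \emph{any} map in $C^0([0,T];H^\infty_\ast(\R^n))$ and were established in Section \ref{sec:moving} without using the evolution equation. So the plan is to concentrate on the first equation $u_0 = (\lambda-i)\sum_k D_k u_k$; once it is in hand, the Ginzburg--Landau form \eqref{eq:covLLG2} and the statement about initial values follow quickly. I would obtain the first equation by substituting the displayed expansion of $\Delta\m$ into the form \eqref{eq:LLG2} of LLG and reading off the frame components $u_0 = \scp{\del_t\m}{X} + i\,\scp{\del_t\m}{Y}$.

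Write $S := \sum_{k=1}^n D_k u_k$. From the displayed expansion of $\Delta\m$ (a direct consequence of the structure relations $\del_\alpha X = -\Re(u_\alpha)\,\m + a_\alpha Y$ and $\del_\alpha Y = -\Im(u_\alpha)\,\m - a_\alpha X$) I get $\scp{\Delta\m}{X} = \Re S$ and $\scp{\Delta\m}{Y} = \Im S$; the normal component of $\Delta\m$ plays no role in what follows. Since $\scp{\m}{X} = \scp{\m}{Y} = 0$, the term $|\nabla\m|^2\m$ in \eqref{eq:LLG2} does not contribute to the frame components, and since $\{X,Y,\m\}$ is a positively oriented orthonormal basis we have $\m\times X = Y$ and $\m\times Y = -X$, so that $\scp{\m\times\Delta\m}{X} = -\Im S$ and $\scp{\m\times\Delta\m}{Y} = \Re S$. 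Plugging these into \eqref{eq:LLG2} gives $\scp{\del_t\m}{X} = \lambda\,\Re S + \Im S$ and $\scp{\del_t\m}{Y} = \lambda\,\Im S - \Re S$, hence $u_0 = \lambda S - i S = (\lambda-i)S$, which is the first equation of \eqref{eq:covLLG}. Applying the connection $D_\ell$ to this identity and using the zero torsion identity in the two forms $D_0 u_\ell = D_\ell u_0$ and $D_\ell u_k = D_k u_\ell$, together with the commutation $D_\ell D_k u_k = D_k D_k u_\ell + [D_\ell,D_k]u_k = D_k D_k u_\ell + R_{\ell k} u_k$, and summing over $k$, yields exactly \eqref{eq:covLLG2}.

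It remains to record the regularity and the initial value. The membership $({\bf u},{\bf a})\in C^0([0,T];H^\infty(\R^n;\C^{1+n}\times\R^{1+n}))$ is inherited from the frame construction of Section \ref{sec:moving}: there $\del_\alpha\m$, $\del_\alpha X$, $\del_\alpha Y$ already lie in $C^0([0,T];H^\infty)$, and $u_\alpha$ and $a_\alpha$ are obtained from these by pointwise products with the bounded fields $X,Y$, so one only needs that $H^\sigma\cap L^\infty$ is closed under pointwise multiplication for each $\sigma$. Evaluating the defining relation \eqref{eq:u_comp} at $t=0$ and using $\m(0) = \m_0$ gives $u(0) = \scp{\nabla\m_0}{X} + i\,\scp{\nabla\m_0}{Y}$, with the limit taken in $H^\infty$ by the continuity just noted. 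The whole argument is mechanical; the only place that needs care is the moving-frame bookkeeping — commuting $\del_k$ past $X$ and $Y$ correctly to produce the expansion of $\Delta\m$, and keeping the signs of $\m\times X$ and $\m\times Y$ straight so that the factor $(\lambda-i)$, and not $(\lambda+i)$, comes out. I do not expect any analytic obstacle.
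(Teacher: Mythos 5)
Your proof is correct and follows the same approach the paper indicates: substitute the moving-frame expansion of $\Delta\m$ into the form \eqref{eq:LLG2} of LLG, project onto $X$ and $Y$ using $\m\times X=Y$, $\m\times Y=-X$, and read off $u_0=(\lambda-i)\sum_k D_k u_k$, after which \eqref{eq:covLLG2} follows from zero torsion and the curvature commutator exactly as you describe. The calculation and the signs check out (note only that the paper's displayed normal component $|a|^2\m$ in the expansion of $\Delta\m$ should read $-|\nabla\m|^2\m$, which, as you observe, is irrelevant for the tangential projections).
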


 \subsection{The Coulomb gauge}  
We observe that the covariant LLG system in invariant with respect to the individual choice
of the moving orthonormal frame $\{X,Y\}$. This local rotation invariance translates into the fact that 
 \eqref{eq:covLLG} is invariant with respect to gauge transformations
$$ u \mapsto e^{-i \theta} u \quad \text{and} \quad a_\alpha \mapsto a_\alpha + \del_\alpha \theta.$$
A canonical choice is the Coulomb gauge which is characterized by the equation
$$ \div a^\ast = \sum_{\ell=1}^n\del_\ell a_\ell^\ast =0$$ 
and obtained from our initial moving frame $\{X,Y\}$ by a gauge change represented by a suitable gradient
$$\nabla \theta \in  C^0([0,T]; H^{\infty}(\R^n))$$ 
where $\theta$ solves, for all $t \in [0,T]$, an elliptic equation
$$-\Delta \theta(t) = \div a(t) \quad \text{in} \quad \R^n .$$
Note that $\nabla \theta(t)$ can be obtained by taking double Riesz transforms
of $a$ which are bounded operators on every Sobolev space $H^{\sigma}(\R^n;\R^n)$, see e.g. \cite{Bergh,Iwaniec_Martin}.
It follows that the new frame $\{X^\ast, Y^\ast\}$ inherits the regularity of $\{X, Y\}$, same for ${\bf a}^\ast$ and ${\bf u}^\ast$,
see \cite{Bejenaru:07} Proposition 2.3. In the sequel we will skip the superscript $\ast$.
\subsection{$L^p$ estimates for ${\bf a}(t)$}
Recall from \eqref{eq:curvature} that for $\alpha, \beta \in \{ 0, \dots, n \}$  
$$ \del_\alpha a_\beta  - \del_\beta a_\alpha     =    \Im (  u_\alpha \bar{u}_\beta) $$
which implies that under the Coulomb gauge  for $\alpha = 0, \dots, n$  
\begin{equation}\label{eq:Coulomb_pot}
-\Delta  a_\alpha =    \del_k \Im (  u_\alpha \bar{u}_k) 
\end{equation}
 
A priori estimates in Section \ref{Sec:apriori} take into account higher power
$L^p$ norms of $u(t)$, i.e. $p=n/\delta$ larger than the critical power $n$, and $L^n$ norms
of $\nabla u(t)$. In turn we will need corresponding estimates for
$a(t)$ in terms of $u(t)$.

\begin{lemma}\label{lemma:a_L^p}
Suppose $\delta \in (\frac{1}{2},1)$ and $n\ge 3$. Then we have under the Coulomb gauge with
a constant $c$  that only depends on $\delta$, $\lambda$ and $n$
\begin{equation}\label{eq:a_x_L^p}
\norm{a(t)}{L^\frac{n}{2\delta-1}} \le c \; \|u(t)\|_{L^\frac{n}{\delta}}^2
\end{equation}  
for all $t \in [0,T]$.
Moreover, there exists a decomposition $a_0=a_0^{(1)}+a_0^{(2)}$ where
$$a_0^{(1)} \in C^0([0,T];L^\frac{n}{\delta}(\R^n)) \quad \text{and} \quad  
a_0^{(2)} \in C^0([0,T];L^\frac{n}{4 \delta - 2}(\R^n))$$
and such that
\begin{equation}\label{eq:a_0_1_L^p}
\|a_0^{(1)}(t)\|_{L^\frac{n}{\delta}}\le c \;  \|u(t)\|_{L^\frac{n}{\delta}} \|\nabla u(t)\|_{L^n}
\end{equation}
and
\begin{equation} \label{eq:a_0_2_L^p}
\| a_0^{(2)}(t)\|_{L^\frac{n}{4 \delta - 2}} \le c \; \|u(t)\|_{L^\frac{n}{\delta}}^2
\end{equation}
for all $t \in [0,T]$.
\end{lemma}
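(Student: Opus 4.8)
The plan is to solve the elliptic equations coming from the Coulomb gauge—namely $-\Delta a_\alpha = \del_k \Im(u_\alpha \bar u_k)$ from \eqref{eq:Coulomb_pot}—using the Riesz potential representation and the Hardy–Littlewood–Sobolev inequality, combined with the covariant structure equations to handle the time component $a_0$. For the spatial part $a = (a_1,\dots,a_n)$, one writes $a_\ell(t) = (-\Delta)^{-1} \del_k \Im(u_\ell \bar u_k)$, so that $a_\ell$ is (up to constants) a composition of a Riesz transform with the Riesz potential $I_1$ applied to the quadratic term $u_\ell \bar u_k \in L^{\frac{n}{2\delta}}$. Since Riesz transforms are bounded on $L^p$ for $1<p<\infty$, and $I_1 : L^{\frac{n}{2\delta}} \to L^{\frac{n}{2\delta-1}}$ by Hardy–Littlewood–Sobolev (the exponents match: $\frac{1}{q} = \frac{2\delta}{n} - \frac{1}{n} = \frac{2\delta-1}{n}$, and $q>1$ precisely because $\delta<1$, while $\frac{n}{2\delta} > 1$ because $\delta < n/2$, which holds since $\delta<1 \le n/2$), we get \eqref{eq:a_x_L^p} immediately with a constant depending only on $n$ and $\delta$.

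For $a_0$, the idea is to use the covariant LLG system to express $u_0$ in terms of the spatial data and then exploit the curvature identity $\del_0 a_k - \del_k a_0 = \Im(u_0 \bar u_k)$ together with $\del_k a_0 - \del_0 a_k$—but the cleaner route is to note that from \eqref{eq:Coulomb_pot} with $\alpha = 0$ we have $-\Delta a_0 = \del_k \Im(u_0 \bar u_k)$, and then substitute the first equation of \eqref{eq:covLLG}, $u_0 = (\lambda - i)\sum_j D_j u_j = (\lambda-i)\sum_j(\del_j u_j + i a_j u_j)$. This splits $u_0$, and hence the source term, into a piece that is linear in $\nabla u$ times $u$—giving $a_0^{(1)}$—and a piece that is cubic in $u$ (through the $a_j u_j$ term, using the already-established bound \eqref{eq:a_x_L^p} on $a$)—giving $a_0^{(2)}$. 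Concretely, set $a_0^{(1)} := (-\Delta)^{-1}\del_k\Im\big((\lambda-i)\del_j u_j\,\bar u_k\big)$ and $a_0^{(2)} := (-\Delta)^{-1}\del_k\Im\big((\lambda-i) i\, a_j u_j \,\bar u_k\big)$. For $a_0^{(1)}$: the source $\del_j u_j\,\bar u_k \in L^r$ with $\frac{1}{r} = \frac{1}{n} + \frac{\delta}{n} = \frac{1+\delta}{n}$ (using $\nabla u \in L^n$, $u \in L^{n/\delta}$), and $I_1 : L^r \to L^{\frac{n}{\delta}}$ since $\frac{1+\delta}{n} - \frac{1}{n} = \frac{\delta}{n}$, yielding \eqref{eq:a_0_1_L^p}. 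For $a_0^{(2)}$: using \eqref{eq:a_x_L^p}, $a_j \in L^{\frac{n}{2\delta-1}}$, so $a_j u_j \bar u_k \in L^s$ with $\frac{1}{s} = \frac{2\delta-1}{n} + \frac{2\delta}{n} = \frac{4\delta-1}{n}$, and $I_1 : L^s \to L^{\frac{n}{4\delta-2}}$ since $\frac{4\delta-1}{n} - \frac{1}{n} = \frac{4\delta-2}{n}$ (and $4\delta - 2 > 0$ because $\delta > \tfrac12$), yielding \eqref{eq:a_0_2_L^p}, with constant now also depending on $\lambda$ through the $(\lambda-i)$ factor. Continuity in $t$ follows from the continuity $u \in C^0([0,T]; H^\infty)$ together with continuity of the Riesz-type operators on the relevant $L^p$ scales.

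The main technical point to watch is the bookkeeping of Sobolev exponents and the requirement that all intermediate exponents lie strictly in $(1,\infty)$ so that the Riesz transforms are bounded and HLS applies: this is exactly where the hypotheses $\delta \in (\tfrac12, 1)$ and $n \ge 3$ get used—$\delta < 1$ keeps the Riesz-potential target exponent finite, $\delta > \tfrac12$ keeps $4\delta - 2 > 0$ so the target exponent for $a_0^{(2)}$ is positive, and $n \ge 3$ ensures $\frac{n}{2\delta} > 1$ and related lower bounds. A secondary subtlety is justifying that $a_\alpha$ really is given by the stated Riesz-potential formula rather than by that formula plus a harmonic polynomial; this is handled by the a priori regularity $a_\alpha(t) \in H^\infty(\R^n)$ from the Coulomb-gauge construction, which forces the unique decaying solution of the Poisson equation, so no extra harmonic terms appear. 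Everything else is a routine application of Calderón–Zygmund theory and Hardy–Littlewood–Sobolev.
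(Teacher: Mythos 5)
Your argument is correct and takes essentially the same route as the paper: invert the Coulomb-gauge Poisson equation and apply Hardy--Littlewood--Sobolev for $a$, then substitute $u_0=(\lambda-i)D_ku_k=(\lambda-i)(\div u+ia\cdot u)$ to split the source for $a_0$ into the identical pieces $a_0^{(1)}$ (from $\div u$) and $a_0^{(2)}$ (from $a\cdot u$), with the same H\"older/HLS exponent bookkeeping. One small slip worth correcting in your justification of the hypotheses: it is $\delta>\tfrac12$, not $\delta<1$, that makes $\tfrac{n}{2\delta}<n$ and keeps $\tfrac{n}{2\delta-1}$ finite, and $n\ge 3$ is needed specifically to guarantee $\tfrac{n}{4\delta-1}>1$ in the $a_0^{(2)}$ estimate, not for $\tfrac{n}{2\delta}>1$ (which already holds for $n\ge 2$).
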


\begin{proof} All estimates are of elliptic type, and we shall suppress the time dependence.
Recall that if $1<p<n$ and $f \in L^p(\R^n;\R^n)$ then
$$-\Delta v =   \div f \quad \text{in} \quad \R^n$$
has a (weak) solutions $v \in \dot{W}^{1,p}(\R^n)$ given by
$$v=(-\Delta)^{-\frac{1}{2}} \sum_{k=1}^n \mathcal{R}_k f_k$$ 
where the $\mathcal{R}_k$ are the
Riesz transforms and $(-\Delta)^{-\frac{1}{2}}$ is the Riesz potential corresponding
to the Fourier multiplier $1/|\xi|$.
It follows from the Hardy-Littlewood-Sobolev inequality, see e.g. \cite{Stein} p.119, that
$v \in L^\frac{np}{n-p}(\R^n)$ with
\begin{equation}\label{eq:hls}
\| v \|_{ L^\frac{np}{n-p}} \le c \,\| f\|_{L^p}.
\end{equation}
Moreover, the solution $v$ is unique in its class.
Hence, in view of \eqref{eq:Coulomb_pot}, 
$$- \Delta u_\ell =   \sum_{k=1}^n \del_k \Im (  \bar{u}_k u_\ell) =  \div \Im (   \bar{u} u_\ell) $$
and estimate \eqref{eq:a_x_L^p}  follows from \eqref{eq:hls} with $p=\frac{n}{2\delta}$.
For the decomposition of $a_0$ we observe that by Proposition \ref{prop:covariant}
\begin{eqnarray*}
 \Im ( \bar{u} u_0) &=& \Im  \Big( (\lambda - i) \,  \bar{u}  \,  D_\ell u_\ell\; \Big)\\ 
&=& \Im \Big( (\lambda - i) \, \bar{u} \,  \div u    \Big)
         + \Re \Big( (\lambda - i) \,  (a \cdot u) \,  \bar{u} \Big).
\end{eqnarray*}
Accordingly we define $a_0^1$ and $a_0^2$ by the equations
$$- \Delta a_0^{(1)} = \div \Big( \lambda\,  \Im \big( \bar{u} \, \div u  \big) -  \Re \big( \bar{u}\, \div u  \big) \Big)$$
and 
$$- \Delta a_0^{(2)} =  \div \Big(   \lambda \, \Re \big(  \bar{u} \, (a\cdot u)  \big)   + \Im \big(\bar{u} \, (a\cdot u)  \big) \Big).$$
Estimate \eqref{eq:a_0_1_L^p} follows again from H\"older and 
\eqref{eq:hls} where $p=\frac{n}{1+\delta}$. 
Taking into account \eqref{eq:a_x_L^p}, estimate \eqref{eq:a_0_2_L^p} follows with $p=\frac{n}{4 \delta -1}$.
The proof is complete.
\end{proof}

\begin{remark} For Schr\"odinger maps, i.e. for $\lambda =0$,  we would obtain in view of \eqref{eq:zero_torsion}
$$ -\Delta a_0 = \div \div \left( \frac{1}{2} |u|^2 \; {\rm \bf id} - \Re(\bar{u} \otimes u) \right)$$ 
and hence $\| a_0 \|_{L^p} \le c \, \|u\|_{L^{2p}}$ for all $p \in (1, \infty)$ by the Cald\'eron-Zygmund inequality.
The fact in case of LLG the estimate for $a_0$ takes into account $\nabla u$ is not essential for our regularity
argument.
\end{remark}

\section{Estimates for the covariant Landau-Lifshitz-Gilbert system}       
\label{Sec:apriori}
In this section we consider 
$$ u\in C^0 \left([0,T]; H^\infty( \R^n;\C^{n})\right) $$
solving the covariant Ginzburg-Landau system \eqref{eq:covLLG2} under the Coulomb gauge.

\subsection{Linear estimates for the semigroup}
Let us first recall the fundamental estimates for the dissipative Schr\"odinger semigroup $S=S(t)$
which is generated by $(\lambda-i) \Delta$.
With a slight abuse of notation, we represent this semigroup in terms of the Fourier multiplier 
$$ \hat{S}_{t}(\xi) = e^{(i-\lambda) |\xi|^2t}$$ 
which is a Schwartz function for every $\lambda>0$ and $t>0$. Thus the associated kernel $S_t$ is also a Schwartz function
with the scaling property
$$S_t(x)=t^{-n/2} S(x/\sqrt{t}).$$
Writing $S(t) f = S_t \ast f$ for $f \in L^p(\R^n;\C)$ we obtain from Young's inequality
the following mapping properties on Lebesgue spaces parallel to
those of the heat kernel (see also \cite{Wu:99} Proposition 2.3):
\begin{lemma}\label{lemma:semigroup} Suppose $\lambda>0$, 
$1 \le p \le q \le \infty$ and $ \sigma\in \N$. Then there exists a constant $c>0$ such that 
$$ \| \nabla^\sigma S(t) \|_{\mathcal{L}(L^p;L^{q})} \le  c \, t^{-\frac{n}{2}\left( \frac{1}{p}-\frac{1}{q} \right)-\frac{\sigma}{2}} \quad \text{for all} \quad t>0.$$
\end{lemma}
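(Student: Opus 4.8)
The plan is to reduce the whole statement to Young's convolution inequality, once the convolution kernel $S_t$ has been identified as a Schwartz function with an explicit scaling. First I would check that $S_1 \in \mathcal{S}(\R^n)$: its Fourier transform $\hat S_1(\xi) = e^{(i-\lambda)|\xi|^2}$ factors as $e^{i|\xi|^2} e^{-\lambda|\xi|^2}$, and since $\lambda>0$ the Gaussian factor forces $\hat S_1$ and each of its derivatives (differentiating the oscillatory factor only produces polynomial factors in $\xi$) to decay faster than any polynomial. Hence $\hat S_1 \in \mathcal{S}(\R^n)$, so $S_1 \in \mathcal{S}(\R^n)$, and in particular $\nabla^\sigma S_1 \in L^r(\R^n)$ for every $1 \le r \le \infty$ and every $\sigma \in \N$.

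Next I would record the scaling identity. From $S_t(x) = t^{-n/2} S_1(x/\sqrt t)$ the chain rule gives $\nabla^\sigma S_t(x) = t^{-(n+\sigma)/2}(\nabla^\sigma S_1)(x/\sqrt t)$, and a change of variables $y = x/\sqrt t$ yields, for every $1 \le r \le \infty$,
$$ \| \nabla^\sigma S_t \|_{L^r} = t^{-\frac{n}{2}\left(1 - \frac{1}{r}\right) - \frac{\sigma}{2}}\, \| \nabla^\sigma S_1 \|_{L^r}. $$

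Finally, given $1 \le p \le q \le \infty$, I would define $r$ by the Young relation $1 + \frac{1}{q} = \frac{1}{r} + \frac{1}{p}$, i.e. $\frac{1}{r} = 1 - \left(\frac{1}{p} - \frac{1}{q}\right)$, which lies in $[0,1]$ precisely because $1 \le p \le q$. Writing $\nabla^\sigma S(t) f = (\nabla^\sigma S_t) \ast f$ and applying Young's inequality together with the scaling identity and the identity $1 - \frac{1}{r} = \frac{1}{p} - \frac{1}{q}$,
$$ \| \nabla^\sigma S(t) f \|_{L^q} \le \| \nabla^\sigma S_t \|_{L^r}\, \| f \|_{L^p} = c\, t^{-\frac{n}{2}\left(\frac{1}{p} - \frac{1}{q}\right) - \frac{\sigma}{2}}\, \| f \|_{L^p}, $$
with $c = \| \nabla^\sigma S_1 \|_{L^r} < \infty$ depending only on $n$, $\sigma$, $\lambda$ and the exponents $p,q$. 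This is exactly the asserted bound.

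There is essentially no serious obstacle; the only point that deserves a line of care is the Schwartz-class claim for $S_1$, i.e. that the oscillation $e^{i|\xi|^2}$ does not destroy the rapid decay supplied by $e^{-\lambda|\xi|^2}$. It is worth noting that the constant $c$ degenerates as $\lambda \searrow 0$ (the kernel ceases to be integrable in the Schrödinger limit), in accordance with the loss of parabolic smoothing for $\lambda=0$ discussed in the introduction.
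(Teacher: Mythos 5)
Your argument is exactly the one the paper uses: observe that $\hat S_1\in\mathcal S(\R^n)$ thanks to the Gaussian factor $e^{-\lambda|\xi|^2}$, hence $S_1\in\mathcal S(\R^n)$, invoke the self-similar scaling $S_t(x)=t^{-n/2}S_1(x/\sqrt t)$ to compute $\|\nabla^\sigma S_t\|_{L^r}$, and conclude by Young's convolution inequality with the exponent $r$ fixed by $1+1/q=1/r+1/p$. The proof is correct and essentially identical to the paper's (which states the Schwartz property, the scaling, and Young's inequality without further elaboration).
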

\subsection{Nonlinear estimates}
We recall that the covariant LLG system in Proposition \ref{prop:covariant}
gives rise to a complex Ginzburg-Landau system \eqref{eq:covLLG2} which, under
the Coulomb gauge, can be written as 
\begin{equation}\label{eq:NLS}
\dd{u}{t} = (\lambda - i)  \Delta u +F({\bf a},u) 
\end{equation} 
where, for ${\bf a}=(a_0,a)$, the nonlinearity is given by 
\begin{equation}\label{eq:nonlinearity}
 F_\ell({\bf a},u) = 
 (\lambda - i) \left\{i\,  \sum_{k=1}^n \Big( \Im(u_\ell \, \bar{u}_k) \, u_k  \Big)+ 2 \, i \,  (a \cdot \nabla) u_\ell  - |a|^2  u_\ell \right\} -i \, a_0 \, u_\ell.
 \end{equation}
Taking into account the decomposition of $a_0=a_0^{(1)}+a_0^{(2)}$ from Lemma \ref{lemma:a_L^p} we find that $F$ splits into five terms
$$  F({\bf a},u)= F^{(1)}+   \dots +  F^{(5)}.$$
Apart from the cubic term $F^{(1)}$ that only depends on $u$ in a local fashion, the functions
$F^{(2)}$ to $F^{(5)}$ can be considered as nonlocal multilinear operators
acting on $u$, that is ${\bf a}={\bf a}(u)$ is considered as a function of $u$. More precisely, in view of Lemma \ref{lemma:a_L^p},
$F^{(2)}$ and $F^{(4)}$ are quadratic in $u$ and linear
in $\nabla u$, while $F^{(3)}$ and  $F^{(5)}$ are quintic in $u$.
Therefore we introduce the functions
\begin{eqnarray*}
f^{(1)}:=F^{(1)} &=& (\lambda  - i) \, i \, \sum_{k=1}^n \Im (u \, \bar{u}_k)   \, u_k  ,\\
f^{(2)}:=F^{(2)}+ F^{(4)} &=&  (\lambda - i )   \; 2 i  \,
 (a \cdot \nabla )u -  i \, a_0^{(1)} \, u,\\
f^{(3)}:=F^{(3)}+ F^{(5)} &=& -( \lambda-i) \; |a|^2  u -i \, a_0^{(2)} \, u
\end{eqnarray*}
 and obtain from Lemma \ref{lemma:a_L^p} and H\"older's inequality:
%
\begin{lemma}\label{lemma:N_estimates}
Suppose $n \ge 3$ and $\delta \in (\frac{1}{2},1)$. Then there exists a constant $c$ such that  
\begin{eqnarray} \label{eq:f_1_estimate}
  \|f^{(1)}(t)\|_{L^\frac{n}{3\delta}}
  & \le & c \; \|u(t)\|_{L^\frac{n}{\delta}}^3,
\\ \label{eq:f_2_estimate}
    \| f^{(2)}(t)  \|_{L^\frac{n}{2 \delta}}& \le& c \; \|u(t)\|_{L^\frac{n}{\delta}}^2 \| \nabla u(t)\|_{L^n},
\\ \label{eq:f_3_estimate}
  \| f^{(3)}(t) \|_{L^\frac{n}{5 \delta-2}} & \le& c \; \|u(t)\|_{L^\frac{n}{\delta}}^5
\end{eqnarray}      
for all $t \in [0,T]$.
\end{lemma}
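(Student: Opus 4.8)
The plan is to prove the three estimates in Lemma~\ref{lemma:N_estimates} by direct application of H\"older's inequality together with the elliptic bounds for $\mathbf{a}$ obtained in Lemma~\ref{lemma:a_L^p}. In each case the exponents are chosen precisely so that the scaling matches: if we assign to $u$ the ``weight'' $n/\delta$ and to $\nabla u$ the weight $n$, then every target exponent $n/(k\delta)$ (or $n/(k\delta - m)$) on the left is exactly the one forced by counting factors of $u$ and $\nabla u$ on the right. So the bookkeeping is routine; the only real content is checking that each intermediate exponent lies in an admissible range.

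First I would treat \eqref{eq:f_1_estimate}. Since $f^{(1)} = (\lambda-i)\, i \sum_k \Im(u\,\bar u_k)\, u_k$ is pointwise trilinear in $u$, H\"older with three factors in $L^{n/\delta}$ gives immediately $\|f^{(1)}\|_{L^{n/3\delta}} \le c\,\|u\|_{L^{n/\delta}}^3$; one just needs $n/(3\delta) \ge 1$, which holds because $n \ge 3$ and $\delta < 1$. Next, for \eqref{eq:f_2_estimate}, recall $f^{(2)} = 2i(\lambda-i)(a\cdot\nabla)u - i\,a_0^{(1)} u$. For the first term I would use H\"older with $\nabla u \in L^n$ and $a \in L^{n/(2\delta-1)}$ (from \eqref{eq:a_x_L^p}), noting $\tfrac{1}{n} + \tfrac{2\delta-1}{n} = \tfrac{2\delta}{n}$, and then bound $\|a\|_{L^{n/(2\delta-1)}} \le c\|u\|_{L^{n/\delta}}^2$; for the second term I would pair $a_0^{(1)} \in L^{n/\delta}$ (from \eqref{eq:a_0_1_L^p}, which already carries the factor $\|u\|_{L^{n/\delta}}\|\nabla u\|_{L^n}$) with $u \in L^{n/\delta}$, giving exponent $n/(2\delta)$ again. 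Finally, for \eqref{eq:f_3_estimate}, with $f^{(3)} = -(\lambda-i)|a|^2 u - i\,a_0^{(2)} u$: the term $|a|^2 u$ is handled by H\"older pairing two factors of $a \in L^{n/(2\delta-1)}$ with one factor of $u \in L^{n/\delta}$, since $\tfrac{2(2\delta-1)}{n} + \tfrac{\delta}{n} = \tfrac{5\delta-2}{n}$, followed by \eqref{eq:a_x_L^p}; the term $a_0^{(2)} u$ uses $a_0^{(2)} \in L^{n/(4\delta-2)}$ (from \eqref{eq:a_0_2_L^p}, with bound $\|u\|_{L^{n/\delta}}^2$) paired with $u \in L^{n/\delta}$, and $\tfrac{4\delta-2}{n} + \tfrac{\delta}{n} = \tfrac{5\delta-2}{n}$, yielding $\|a_0^{(2)}u\|_{L^{n/(5\delta-2)}} \le c\|u\|_{L^{n/\delta}}^3$, hence the quintic bound after inserting the quadratic estimate for $a_0^{(2)}$.

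The main thing to verify carefully, and the only place where something could go wrong, is the admissibility of all the exponents involved: every exponent appearing must lie strictly between $1$ and $\infty$ (or be $\ge 1$ where a Young/H\"older endpoint is used) and, crucially, the Hardy--Littlewood--Sobolev step already invoked inside Lemma~\ref{lemma:a_L^p} requires the source exponents $p = n/(2\delta)$, $p = n/(1+\delta)$, $p = n/(4\delta-1)$ to satisfy $1 < p < n$. For $\delta \in (\tfrac12, 1)$ and $n \ge 3$ one checks $2\delta-1 \in (0,1)$ so $n/(2\delta-1) > n > 1$; $4\delta - 2 \in (0,2)$ so $n/(4\delta-2) > n/2 \ge 3/2 > 1$; and the target exponents $n/(3\delta), n/(2\delta), n/(5\delta-2)$ are all $\ge n/3 \ge 1$ (using $3\delta < 3$, $5\delta - 2 < 3$). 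Since these are all finite inequalities in $\delta$ and $n$, this is the step that genuinely must be done, but it is elementary. With the exponent arithmetic confirmed, the three displayed estimates follow by assembling the H\"older inequalities with the bounds from Lemma~\ref{lemma:a_L^p} as indicated, and the constant $c$ depends only on $n$, $\delta$ and $\lambda$ as claimed.
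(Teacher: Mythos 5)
Your method is exactly the one the paper intends: the paper gives no separate proof of this lemma beyond the remark that it follows ``from Lemma~\ref{lemma:a_L^p} and H\"older's inequality,'' and your exponent bookkeeping for $f^{(1)}$, for both pieces of $f^{(2)}$, and for the $|a|^2 u$ piece of $f^{(3)}$ is correct, together with the admissibility checks on the various Lebesgue indices.

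There is, however, one genuine snag in the $a_0^{(2)}u$ term, and it is worth being precise about it. Taking \eqref{eq:a_0_2_L^p} at face value you get
$\|a_0^{(2)}u\|_{L^{n/(5\delta-2)}} \le \|a_0^{(2)}\|_{L^{n/(4\delta-2)}}\,\|u\|_{L^{n/\delta}} \le c\,\|u\|_{L^{n/\delta}}^{3}$,
and a cubic bound does not imply the stated quintic bound $c\,\|u\|_{L^{n/\delta}}^{5}$ (these are incomparable unless $\|u\|_{L^{n/\delta}}\ge 1$). Your closing phrase ``hence the quintic bound after inserting the quadratic estimate for $a_0^{(2)}$'' is therefore a non sequitur: inserting the quadratic estimate is precisely what produced the cubic bound. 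The reconciliation is that \eqref{eq:a_0_2_L^p} as printed is a typographical slip. Tracing through the proof of Lemma~\ref{lemma:a_L^p}, the source for $a_0^{(2)}$ is $\div\big(\bar u\,(a\cdot u)\big)$; H\"older gives $\bar u\,(a\cdot u)\in L^{n/(4\delta-1)}$ with norm $\le\|u\|_{L^{n/\delta}}^{2}\|a\|_{L^{n/(2\delta-1)}}$, and after HLS with $p=n/(4\delta-1)$ and then \eqref{eq:a_x_L^p} one obtains $\|a_0^{(2)}\|_{L^{n/(4\delta-2)}}\le c\,\|u\|_{L^{n/\delta}}^{4}$ (this is also forced by the paper's own remark that $F^{(5)}=-i\,a_0^{(2)}u$ is \emph{quintic} in $u$). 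With the corrected quartic bound, your H\"older step gives $\|a_0^{(2)}u\|_{L^{n/(5\delta-2)}}\le c\,\|u\|_{L^{n/\delta}}^{5}$ and \eqref{eq:f_3_estimate} follows as claimed. So your route is right; you just need to notice and correct the power in \eqref{eq:a_0_2_L^p} before invoking it.
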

%
%
\subsection{Duhamel's principle and Fujita-Kato type estimate} At this point we have essentially
reduced the problem to a linear one.
Writing $f=f^{(1)} +f^{(2)}+ f^{(3)}$ we obtain the following representation formula
\begin{equation}\label{eq:Duhamel}
u(t) = S(t) u(0) +  S  \ast f (t) 
\end{equation}
where $S(t) f = S_t \ast f$ (i.e. convolution in space) and 
\begin{equation}\label{eq:G}
(S \ast f )(t) := \int_0^t S(t-s) f(s) \, ds
\end{equation}
i.e. convolution in space and time.
Based on this representation we shall derive a priori estimates in suitable (scaling invariant) 
weighted-in-time Lebesgue-Sobolev spaces. For this purpose we set, for $\delta \in (\frac{1}{2},1)$ and $t \in [0,T]$,
$$ K(t)= \sup_{\tau \in (0,t)} \tau^\frac{1-\delta}{2} \| u(\tau)\|_{L^\frac{n}{\delta}}$$
$$ K'(t)= \sup_{\tau \in (0,t)} \tau^\frac{1}{2} \| \nabla u(\tau)\|_{L^{n}}$$ 
and 
$$R(t)=\max \{ K(t),K'(t) \}.$$ It follows from Sobolev embedding that
\begin{equation}\label{eq:Rto0}
R(t) \le c \, t^\frac{1-\delta}{2} \|u(t)\|_{H^\frac{n}{2}} \quad  \text{for all} \quad t \in [0,T].
\end{equation}
Hence $R:(0,T) \to [0,\infty)$ is continuous with
$\displaystyle{\lim_{t \searrow 0} R(t) = 0}$. We also introduce
$$R_0(t)= \max \left\{ \sup_{\tau \in (0,t)}
                     \tau^\frac{1-\delta}{2} \| S(\tau) u(0)\|_{L^\frac{n}{\delta}} \;,\;
                      \sup_{\tau \in (0,t)} \tau^\frac{1}{2} \| \nabla S(\tau) u(0)\|_{L^{n}} \right\}.$$ 
According to Lemma \ref{lemma:semigroup} we have
\begin{equation}\label{eq:R_0}
R_0(t) \le c \, \|u(0)\|_{L^n}
\end{equation}
independently of $t>0$ and for a constant $c$ that only depends on $\delta$, $\lambda$ and $n$.                       

\begin{lemma} \label{lemma:R_estimate}
Suppose $\delta \in (\frac{3}{5},\frac{2}{3})$. Then there exists a positive constant $c_0$
such that
\begin{equation} \label{eq:R_estimate}
 R(t) \le R_0(t) + c_0 \left( R(t)^3+R(t)^5 \right).
 \end{equation}
Moreover, there exits a positive constant $r_0$ with the following property: If 
$$\sup_{t \in (0,T)} R_0(t) < r_0$$ then $R(t) \le 2 \, R_0(t)$
for every $t \in [0,T]$.
\end{lemma}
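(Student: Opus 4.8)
The estimate \eqref{eq:R_estimate} is a Duhamel/fixed-point estimate of the kind familiar from the Fujita--Kato theory: one feeds the representation formula \eqref{eq:Duhamel} into the definitions of $K(t)$ and $K'(t)$, bounds the linear contribution by $R_0(t)$, and bounds each of the three nonlinear pieces $f^{(1)}, f^{(2)}, f^{(3)}$ using the $L^p$ nonlinearity estimates of Lemma \ref{lemma:N_estimates} together with the smoothing estimates of Lemma \ref{lemma:semigroup}. The role of the hypothesis $\delta \in (\tfrac35,\tfrac23)$ is exactly to make all the exponent bookkeeping work: I would first verify that for such $\delta$ the source exponents $\tfrac{n}{3\delta}$, $\tfrac{n}{2\delta}$ (after absorbing one $\|\nabla u\|_{L^n}$ factor, which has its own weight), and $\tfrac{n}{5\delta-2}$ all lie in the admissible range $(1,\tfrac{n}{\delta}]$, so that $S(t-s)$ maps them into $L^{n/\delta}$ (resp.\ that $\nabla S(t-s)$ maps them into $L^n$) with an integrable-in-$s$ time weight. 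The structure of the argument is: start from $u(t) = S(t)u(0) + \int_0^t S(t-s) f(s)\, ds$, apply $\tau^{(1-\delta)/2}\|\cdot\|_{L^{n/\delta}}$ (and separately $\tau^{1/2}\|\nabla\cdot\|_{L^n}$), and estimate.

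For the cubic term $f^{(1)}$: by Lemma \ref{lemma:semigroup} with $p=\tfrac{n}{3\delta}$, $q=\tfrac{n}{\delta}$ one has $\|S(t-s)f^{(1)}(s)\|_{L^{n/\delta}}\le c\,(t-s)^{-n(\frac{1}{p}-\frac{1}{q})/2}\|f^{(1)}(s)\|_{L^{n/3\delta}} \le c\,(t-s)^{-\delta}\|u(s)\|_{L^{n/\delta}}^3 \le c\,(t-s)^{-\delta} s^{-3(1-\delta)/2} R(t)^3$; multiplying by $t^{(1-\delta)/2}$ and computing the Beta-type integral $\int_0^t (t-s)^{-\delta}s^{-3(1-\delta)/2}\,ds = t^{1-\delta-3(1-\delta)/2}\,B(1-\delta,1-\tfrac{3(1-\delta)}{2})$ gives the bound $c\,R(t)^3$, provided $\delta<1$ and $3(1-\delta)/2<1$, i.e.\ $\delta>\tfrac13$, and provided the total power of $t$ is $\ge -(1-\delta)/2$, i.e.\ $1-\delta-\tfrac{3(1-\delta)}{2}\ge -\tfrac{1-\delta}{2}$, which holds with equality --- this is what forces the precise scaling and why a genuine lower bound on $\delta$ appears. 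The same computation with $\nabla S(t-s)$ and $q=n$ (one extra half-power of $(t-s)$, integrable since $\delta+\tfrac12<1$ fails in general --- here one instead splits $\nabla S(t-s) = \nabla S(\tfrac{t-s}{2}) S(\tfrac{t-s}{2})$, losing nothing in integrability because $\delta<\tfrac23$) yields the $K'$ half. For $f^{(2)}$, which carries a $\|\nabla u(s)\|_{L^n}$ factor worth an extra weight $s^{-1/2}$, the exponent is $p=\tfrac{n}{2\delta}$ and one gets $(t-s)^{-\delta/2 }$ hmm --- precisely $n(\frac{2\delta}{n}-\frac{\delta}{n})/2=\delta/2$, so $(t-s)^{-\delta/2}s^{-(1-\delta)-1/2}$, integrable and with the right total power exactly when $\delta\in(\tfrac35,\tfrac23)$; this term produces $c\,R(t)^3$ as well (two $u$'s and one $\nabla u$). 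Finally $f^{(3)}$ is quintic: $p=\tfrac{n}{5\delta-2}$, time weight $s^{-5(1-\delta)/2}$, and one gets $c\,R(t)^5$, with integrability of $s^{-5(1-\delta)/2}$ near $0$ requiring $\tfrac{5(1-\delta)}{2}<1$, i.e.\ $\delta>\tfrac35$, and the Riesz-potential step behind $a_0^{(2)}$ from Lemma \ref{lemma:a_L^p} requiring $5\delta-2>0$, automatic. Summing the three contributions and the linear term gives \eqref{eq:R_estimate} with some $c_0=c_0(\delta,\lambda,n)$.

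For the second assertion, set $\rho(t)=R(t)/r_0$-type reasoning: since $R$ is continuous on $(0,T)$ with $R(t)\to 0$ as $t\searrow 0$ (by \eqref{eq:Rto0}), and since \eqref{eq:R_estimate} gives $R(t)\le R_0(t)+c_0(R(t)^3+R(t)^5)$, choose $r_0>0$ small enough that $c_0\big((2r_0)^3+(2r_0)^5\big)\le r_0$, equivalently $c_0\big((2r_0)^2+(2r_0)^4\big)\le \tfrac12$. Then a standard continuity (bootstrap) argument applies: for small $t$, $R(t)\le 2R_0(t)$ trivially; let $t^\ast$ be the supremum of times up to which $R(t)\le 2R_0(t)$ holds; if $t^\ast<T$ then at $t=t^\ast$ we have $R(t^\ast)\le 2R_0(t^\ast)\le 2r_0$, hence $c_0(R(t^\ast)^3+R(t^\ast)^5)\le c_0((2r_0)^2+(2r_0)^4)R(t^\ast)\le \tfrac12 R(t^\ast)$, and \eqref{eq:R_estimate} gives $R(t^\ast)\le R_0(t^\ast)+\tfrac12 R(t^\ast)$, i.e.\ $R(t^\ast)\le 2R_0(t^\ast)$ with the strict inequality $R(t^\ast)<2R_0(t^\ast)$ unless $R_0(t^\ast)=0$; continuity of $R$ and of $R_0$ then lets one push past $t^\ast$, contradicting maximality. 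Hence $R(t)\le 2R_0(t)$ on all of $[0,T]$.

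\textbf{Main obstacle.} The substantive difficulty is not the abstract bootstrap but the exponent arithmetic: one must check that for $\delta\in(\tfrac35,\tfrac23)$ every Hardy--Littlewood--Sobolev / semigroup step lands in an admissible Lebesgue exponent $>1$ and that every time-integral $\int_0^t (t-s)^{-\theta} s^{-\mu}\,ds$ has $\theta<1$, $\mu<1$, and total residual power of $t$ matching the prescribed weight $(1-\delta)/2$ (resp.\ $1/2$) on the left --- the endpoint nature of this matching is exactly why the open interval $(\tfrac35,\tfrac23)$, and not merely $(\tfrac12,1)$, is the correct hypothesis. The $K'$ (gradient) estimates are slightly more delicate than the $K$ estimates because $\nabla S(t-s)$ costs an extra half power of $(t-s)$; one circumvents the loss by factoring the semigroup as $S(t-s)=S(\tfrac{t-s}{2})\circ S(\tfrac{t-s}{2})$ and using the $q=\infty$ or $q=n/\delta\to n$ gradient bound on the first factor only. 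I would also take care, in the $f^{(2)}$ term, to keep the two weights ($(1-\delta)/2$ from each $\|u\|_{L^{n/\delta}}$, and $1/2$ from $\|\nabla u\|_{L^n}$) bookkept separately so that the resulting power of $s$ is $-(1-\delta)-\tfrac12$, whose integrability near $s=0$ is precisely the binding constraint $\delta>\tfrac35$ (and whose total $t$-power being $\ge -(1-\delta)/2$ forces $\delta<\tfrac23$).
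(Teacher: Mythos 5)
Your plan matches the paper's proof essentially step for step: feed Duhamel \eqref{eq:Duhamel} into the definitions of $K$ and $K'$, bound the linear part by $R_0$, and bound $S*f^{(i)}$ and $\nabla S*f^{(i)}$ for $i=1,2,3$ via Lemma \ref{lemma:semigroup} together with Lemma \ref{lemma:N_estimates}, arriving at the same time-weights $(t-s)^{-\theta}s^{-\mu}$ and hence at \eqref{eq:R_estimate}; the bootstrap/contradiction step using continuity of $R$ and $R(0^+)=0$ is also the same. Two small wobbles worth noting, neither of which affects correctness of the strategy: the splitting $\nabla S(t-s)=\nabla S(\tfrac{t-s}{2})S(\tfrac{t-s}{2})$ you invoke is unnecessary (and, as you wrote it, would put you in $L^{n/\delta}$ with $n/\delta>n$, outside the range of Lemma \ref{lemma:semigroup}); one simply applies the lemma directly with $\nabla S(t-s)\colon L^{n/(3\delta)}\to L^n$ costing $(t-s)^{-3\delta/2}$, which is where $\delta<\tfrac23$ actually enters. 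Similarly, the binding lower bound $\delta>\tfrac35$ comes from the $f^{(3)}$ weight $s^{-5(1-\delta)/2}$ (not from $f^{(2)}$, whose $s^{-3/2+\delta}$ only needs $\delta>\tfrac12$); you state these constraints correctly earlier in the writeup, only the closing paragraph misattributes them.
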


\begin{proof}
Estimate \eqref{eq:R_estimate} will follow from \eqref{eq:Duhamel}.
Taking into account that for $\alpha,\beta<1$ (restricting the range of admissible $\delta$)
and for positive $t$ with a constant $c=c(\alpha,\beta)$
$$ \int_0^t (t-s)^{-\alpha} s^{-\beta} \, ds  = c \, t^{1-\alpha-\beta}$$
we obtain form Lemma \ref{lemma:semigroup} and Lemma \ref{lemma:N_estimates} with a generic constant $c$
$$ \left\|\left(S \ast f^{(1)} \right) (t) \right\|_{L^\frac{n}{\delta}} \le c \; K(t)^3 \int_0^t (t-s)^{-\delta} s^{-\frac{3}{2}(1-\delta)} \; ds
= c \, K(t)^3\;  t^{\frac{\delta-1}{2}},$$

$$ \left\| \left(S \ast f^{(2)} \right)(t) \right\|_{L^\frac{n}{\delta}} \le c \; K(t)^2 K'(t)\int_0^t (t-s)^{-\frac{\delta}{2}} s^{-\frac{3}{2}+\delta}     \; ds
 = c \, K(t)^2 K'(t) \; t^{\frac{\delta-1}{2}},$$

$$ \left\| \left(S \ast f^{(3)} \right)(t)\right\|_{L^\frac{n}{\delta}} \le c \; K(t)^5  \int_0^t (t-s)^{1 - 2 \delta} s^{-\frac{5}{2}(1-\delta)} \; ds=
 c \, K(t)^5 \; t^{\frac{\delta-1}{2}}$$

for $t \in (0,T)$, provided $\delta \in (\frac{3}{5},1)$ (used in the 3rd estimate). Moreover
$$ \left\| \nabla \left(S \ast f^{(1)}  \right)(t) \right\|_{L^n} \le c \; K(t)^3 \int_0^t (t-s)^{-\frac{3\delta}{2}}
 s^{-\frac{3}{2}(1-\delta)} \; ds = c \, K(t)^3\;  t^{-\frac{1}{2}},$$

$$ \left\| \nabla \left(S \ast f^{(2)} \right)(t) \right\|_{L^n} \le c \; K(t)^2 K'(t)\int_0^t (t-s)^{-\delta} s^{-\frac{3}{2}+\delta}     \; ds
 = c \, K(t)^2 K'(t) \; t^{-\frac{1}{2}},$$

$$ \left\| \nabla \left(S \ast f^{(3)}  \right) (t)\right\|_{L^n} \le c \; 
   K(t)^5  \int_0^t (t-s)^{-\frac{1}{2} (5 \delta -2)} s^{-\frac{5}{2}(1-\delta)} \; ds = c \, K(t)^5 \; t^{-\frac{1}{2}}$$

for all $t \in (0,T)$ provided $\delta \in (\frac{1}{2},\frac{2}{3})$ (used in the 1st estimate). Hence \eqref{eq:R_estimate} follows.

\medskip

To prove the second claim, we argue by contradiction and assume there exists $t_0 \in (0,T)$ such that $R(t_0) = 2 R_0(t_0) \not=0$. 
Then, by virtue of \eqref{eq:R_estimate},
$$ 2 R_0(t) \le R_0(t_0) +   c_0 \, \left( 8 \, R_0(t_0)^2+ 32 \, R_0(t_0)^4 \right) R_0(t_0).$$
Thus for $0<R_0(t_0) < r_0$ with $r_0$ sufficiently small
$$  1/c_0 \le 8 \, R_0(t_0)^2+ 32 \, R_0(t_0)^4 < 1/c_0,$$
a contradiction. Since $R$ is continuous with $\lim_{t \searrow 0} R(t)=0$, the claim follows.\end{proof}

Recall that by \eqref{eq:R_0} the smallness condition can be expressed in terms of $\|u(0)\|_{L^n}$. Hence we obtain:

\begin{proposition}  \label{prop:R_estimate} Suppose $\delta \in (\frac{3}{5},\frac{2}{3})$. 
Then there exist positive constants $\rho$ and $c$ depending only on $\delta$, $\lambda$ and $n$ with the following property: If
$$\| u(0) \|_{L^n} < \rho$$
then the following estimate holds true for every $t \in (0,T]$
$$  t^\frac{1-\delta}{2} \| u(t)\|_{L^\frac{n}{\delta}} + t^\frac{1}{2} \| \nabla u(t)\|_{L^{n}} 
 \le c \, \| u(0) \|_{L^n}.$$ 
\end{proposition}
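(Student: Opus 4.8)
The plan is to read the estimate off directly from the bootstrap inequality of Lemma~\ref{lemma:R_estimate} together with the linear semigroup bound \eqref{eq:R_0}. Recall that \eqref{eq:R_0} provides a constant $c_1 = c_1(\delta,\lambda,n)$, \emph{independent of $t$}, with $R_0(t) \le c_1 \|u(0)\|_{L^n}$ for all $t>0$, and that the second part of Lemma~\ref{lemma:R_estimate} provides a threshold $r_0 = r_0(\delta,\lambda,n)$ such that $\sup_{t\in(0,T)} R_0(t) < r_0$ forces $R(t) \le 2R_0(t)$ on $[0,T]$. So I would simply set $\rho := r_0 / c_1$.

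First I would check that the smallness hypothesis propagates: if $\|u(0)\|_{L^n} < \rho$, then $\sup_{t\in(0,T)} R_0(t) \le c_1 \|u(0)\|_{L^n} < c_1 \rho = r_0$, which is exactly the hypothesis of the second part of Lemma~\ref{lemma:R_estimate}. Here one uses that $u \in C^0([0,T];H^\infty)$, so that $R$ is continuous on $(0,T)$ with $R(t)\to 0$ as $t\searrow 0$ by \eqref{eq:Rto0}; this is what makes the continuation/contradiction argument in that lemma applicable. Hence $R(t) \le 2 R_0(t) \le 2c_1\|u(0)\|_{L^n}$ for every $t \in [0,T]$.

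Then I would unwind the definitions. Since $R(t) = \max\{K(t),K'(t)\}$ with
$$ K(t) = \sup_{\tau\in(0,t)} \tau^{\frac{1-\delta}{2}}\|u(\tau)\|_{L^{n/\delta}}, \qquad K'(t) = \sup_{\tau\in(0,t)} \tau^{\frac12}\|\nabla u(\tau)\|_{L^n}, $$
and the relevant functions of $\tau$ are continuous on $(0,T]$, we have in particular $t^{\frac{1-\delta}{2}}\|u(t)\|_{L^{n/\delta}} \le K(t) \le R(t)$ and $t^{\frac12}\|\nabla u(t)\|_{L^n} \le K'(t) \le R(t)$ for each $t\in(0,T]$. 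Adding these and inserting $R(t) \le 2c_1\|u(0)\|_{L^n}$ yields the assertion with $c := 4c_1$, a constant depending only on $\delta,\lambda,n$.

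There is no genuine obstacle at this stage: all the analytic work has been carried out in Lemmas~\ref{lemma:a_L^p}, \ref{lemma:semigroup}, \ref{lemma:N_estimates} and \ref{lemma:R_estimate}. The only point requiring care is the bookkeeping of constants — choosing $\rho$ and $c$ from the $T$-independent constants above so that the estimate is genuinely uniform in $T$ (this uniformity is precisely what will later permit pushing the estimate to the maximal existence interval) — and recording that the restriction $\delta\in(\tfrac35,\tfrac23)$ is inherited verbatim from Lemma~\ref{lemma:R_estimate}.
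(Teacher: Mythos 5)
Your proposal is correct and takes essentially the same approach as the paper, which passes directly from \eqref{eq:R_0} and the second part of Lemma~\ref{lemma:R_estimate} to the proposition with the one-line remark that the smallness condition can be expressed via $\|u(0)\|_{L^n}$. You have simply written out the constant bookkeeping ($\rho = r_0/c_1$, $c=4c_1$) and the unwinding of $R(t)=\max\{K(t),K'(t)\}$ that the paper leaves implicit.
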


\subsection{Uniform bounds in $L^n$ and H\"older continuity} 

\begin{lemma}
Suppose $\alpha \in (0,\frac{1}{5})$. Then there exits a constant $\rho$ and $c$ depending only on
$\alpha$, $\lambda$ and $n$ with the following property: If
$$ \| u(0) \|_{L^n} < \rho $$
then the following estimate holds every $t \in (0,T]$
$$  t^{\frac{1+\alpha}{2}}\|\nabla u(t)\|_{L^\frac{n}{1-\alpha}}  \le c  \, \|u(0)\|_{L^n} .$$
\end{lemma}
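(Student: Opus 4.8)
The plan is a \emph{direct} estimate based on Duhamel's formula \eqref{eq:Duhamel}, now measuring $\nabla u$ in the higher Lebesgue space $L^{n/(1-\alpha)}$ and feeding in the bounds already produced by Proposition \ref{prop:R_estimate}. Since $u\in C^0([0,T];H^\infty)$ all the norms below are a priori finite, so no fixed-point or continuity argument in the new quantity is needed; it suffices to bound the right-hand side of \eqref{eq:Duhamel} term by term, and every contribution will turn out to carry the scaling factor $t^{-(1+\alpha)/2}$ dictated by the homogeneity of the equation (which is why the weight in the statement is $t^{(1+\alpha)/2}$). The one genuinely delicate point, and the place where the hypothesis $\alpha<\tfrac15$ enters, is the choice of the auxiliary exponent $\delta$.

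First I would treat the linear part: Lemma \ref{lemma:semigroup} with $p=n$, $q=\tfrac{n}{1-\alpha}$, $\sigma=1$ gives directly $\|\nabla S(t)u(0)\|_{L^{n/(1-\alpha)}}\le c\,t^{-(1+\alpha)/2}\|u(0)\|_{L^n}$. For the Duhamel term I would keep the splitting $f=f^{(1)}+f^{(2)}+f^{(3)}$ from Section~\ref{Sec:apriori}; combining Lemma \ref{lemma:N_estimates} with the bounds $K(t),K'(t)\le c\|u(0)\|_{L^n}$ of Proposition \ref{prop:R_estimate} yields the pointwise-in-time estimates $\|f^{(1)}(s)\|_{L^{n/(3\delta)}}\le c\|u(0)\|_{L^n}^3\,s^{-3(1-\delta)/2}$, $\|f^{(2)}(s)\|_{L^{n/(2\delta)}}\le c\|u(0)\|_{L^n}^3\,s^{-3/2+\delta}$ and $\|f^{(3)}(s)\|_{L^{n/(5\delta-2)}}\le c\|u(0)\|_{L^n}^5\,s^{-5(1-\delta)/2}$. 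Applying $\nabla S(t-s)$ from the respective source space into $L^{n/(1-\alpha)}$ via Lemma \ref{lemma:semigroup} and using $\int_0^t(t-s)^{-a}s^{-b}\,ds=c\,t^{1-a-b}$ for $a,b<1$, each of the three space-time convolutions is bounded by $c\,\|u(0)\|_{L^n}^{k}\,t^{-(1+\alpha)/2}$ with $k=3,3,5$ respectively — a near-verbatim repetition of the computations in the proof of Lemma \ref{lemma:R_estimate}.

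The hard part is bookkeeping the admissibility conditions $a<1$, $b<1$ (and $p\le q$ in each application of Lemma \ref{lemma:semigroup}) for a single fixed $\delta$. The binding ones are: the convolution $\nabla(S\ast f^{(1)})$ has $a=(3\delta+\alpha)/2$, so $a<1$ forces $\delta<(2-\alpha)/3$; while $\nabla(S\ast f^{(3)})$ has $b=5(1-\delta)/2$, so $b<1$ forces $\delta>\tfrac35$. A $\delta$ with $\tfrac35<\delta<(2-\alpha)/3$ exists precisely when $\alpha<\tfrac15$, which is exactly the hypothesis; moreover $(2-\alpha)/3<\tfrac23$, so such a $\delta$ automatically lies in the range $(\tfrac35,\tfrac23)$ required by Proposition \ref{prop:R_estimate} and in $(\tfrac12,1)$ required by Lemma \ref{lemma:N_estimates}. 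With $\delta$ fixed this way one checks that all remaining exponent conditions (the $f^{(2)}$ convolution with $a=(2\delta+\alpha)/2$, $b=3/2-\delta$; the embeddings $p\le q$; the lower-endpoint integrability) hold automatically, and each time-integral evaluates to exactly $t^{-(1+\alpha)/2}$.

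Putting the four contributions together gives $t^{(1+\alpha)/2}\|\nabla u(t)\|_{L^{n/(1-\alpha)}}\le c\big(\|u(0)\|_{L^n}+\|u(0)\|_{L^n}^3+\|u(0)\|_{L^n}^5\big)$, and choosing $\rho$ small enough that the right-hand side is $\le c\,\|u(0)\|_{L^n}$ whenever $\|u(0)\|_{L^n}<\rho$ finishes the proof.
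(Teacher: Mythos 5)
Your argument is correct and follows the same route as the paper: Duhamel with the linear part handled by Lemma \ref{lemma:semigroup} at the shifted exponent, the $f^{(1)},f^{(2)},f^{(3)}$ convolutions re-estimated in $L^{n/(1-\alpha)}$, and the $\alpha<\tfrac15$ hypothesis arising precisely from the joint constraints $3\delta+\alpha<2$ (from $f^{(1)}$) and $\delta>\tfrac35$ (from $f^{(3)}$). The only cosmetic difference is that you plug in $K(t),K'(t)\le c\|u(0)\|_{L^n}$ from Proposition \ref{prop:R_estimate} immediately, while the paper first records the intermediate bound \eqref{eq:M_estimate} in terms of $R(t)$ and then invokes Lemma \ref{lemma:R_estimate} and \eqref{eq:R_0} to close it — the two are equivalent.
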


\begin{proof} 
We first show that for some constant $c>0$ independent of $t \in (0,T]$ \begin{equation}\label{eq:M_estimate}
 t^{\frac{1+\alpha}{2}}\|\nabla u(t)\|_{L^\frac{n}{1-\alpha}} \le c \, \| u(0) \|_{L^n} + c\,\left( R(t)^3+R(t)^5 \right). 
 \end{equation}
To this end we repeat the argument in Lemma \ref{lemma:R_estimate} with a change in the 
exponent  of $t-s$ according to Lemma \ref{lemma:semigroup}. With a generic constant $c$ we obtain for all $t \in (0,T]$
$$ \left\| \nabla \left(S \ast f^{(1)} \right)(t) \right\|_{L^\frac{n}{1-\alpha}}\le c \; R(t)^3 \int_0^t (t-s)^{-\frac{1}{2}(3\delta+\alpha)} s^{-\frac{3}{2}(1-\delta)} \; ds
 = c \, R(t)^3\;  t^{-\frac{1+\alpha}{2}},$$
$$ \left\| \nabla \left(S \ast f^{(2)} \right)(t) \right\|_{L^\frac{n}{1-\alpha}} \le c \; R(t)^3\int_0^t (t-s)^{-\frac{1}{2} (2 \delta+\alpha)} s^{-\frac{3}{2}+\delta}     \; ds
 = c \, R(t)^3 \; t^{-\frac{1+\alpha}{2}},$$
 $$ \left\| \nabla \left(S \ast f^{(3)} \right)(t) \right\|_{L^\frac{n}{1-\alpha}} \le c \; 
   R(t)^5  \int_0^t (t-s)^{-\frac{1}{2} (5 \delta -2 +\alpha)} s^{-\frac{5}{2}(1-\delta)} \; ds = c \, R(t)^5 \; t^{-\frac{1+\alpha }{2}}.$$
The first estimate requires $3\delta+\alpha<2$ while the third estimate requires $ \delta>\frac{3}{5}$
which is possible by an appropriate choice of $\delta \in (\frac{3}{5},\frac{2}{3})$ provided $\alpha \in (0,\frac{1}{5})$.
Finally we observe that according to Lemma \ref{lemma:semigroup}
$$ t^{\frac{1+\alpha}{2}}  \| \nabla S(t)u(0) \|_{L^\frac{n}{1-\alpha}}  \le c \, \|u(0)\|_{L^n}$$
for every $t>0$ and with a constant $c$ that only depeneds on $\alpha$ and $n$.
Hence Duhamel's formula \eqref{eq:Duhamel} implies \eqref{eq:M_estimate}. If $\rho$ is sufficiently small
Lemma \ref{lemma:R_estimate} implies
$$R(t) \le 2 R_0(t) \quad \text{while by \eqref{eq:R_0} } \quad R_0(t) \le c \, \|u(0)\|_{L^n}< c \rho, $$
and we obtain 
$t^{\frac{1+\alpha}{2}}\|\nabla u(t)\|_{L^\frac{n}{1-\alpha}} \le c\,\left( 1 + \rho^2+\rho^4 \right) \| u(0) \|_{L^n}$ as claimed.
\end{proof}

The Lemma implies, by virtue of Morrey's inequality, H\"older continuity of $u(t)$ for every $t \in (0,T)$
with H\"older
exponent $\alpha \in (0, \frac{1}{5})$ and bounds that only depend on $\| u(0) \|_{L^n}$, $\alpha$, $\lambda $, $n$, and $t$.
Along the lines of the above argument we also obtain
\begin{equation} \label{eq:L^n_estimate}
\| u(t) \|_{L^n} \le c \; \| u(0) \|_{L^n} \quad \text{for all} \quad t \in [0,T]
\end{equation}
provided $\| u(0) \|_{L^n} < \rho$.
An $L^\infty$ bound for $u(t)$ can be obtained by scaling. In fact, 
 $$\norm{u(t)}{L^\infty} \le \norm{u(t)}{C^\alpha} \le c\left( \norm{u(t)}{L^n}+ \norm{\nabla u(t)}{L^\frac{n}{1-\alpha}}\right)$$
hence, see \cite{Taylor_3} p.9, 
$\displaystyle{\norm{u(t)}{L^\infty} \le c \;  \norm{u(t)}{L^n}^\frac{\alpha}{1+\alpha} \norm{\nabla u(t)}{L^\frac{n}{1-\alpha}}^\frac{1}{1+\alpha} 
 \le \frac{c}{\sqrt{t}} \|u(0)\|_{L^n}}$.

\begin{proposition} \label{prop:alpha} 
There exist constants $\rho$ and $c$ depending only on
$\lambda$ and $n$ with the following property: If
$$ \| u(0) \|_{L^n} < \rho $$
then  
the following estimate holds true for every $t \in [0,T]$
$$ \sqrt{t} \; \|u(t)\|_{L^\infty} + \| u(t) \|_{L^n} \le c \, \|u(0)\|_{L^n}.$$
\end{proposition}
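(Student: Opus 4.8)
The plan is to assemble the estimate from three ingredients, two of which are already essentially established in the discussion preceding the statement: the scaling-critical bound \eqref{eq:L^n_estimate} for $\|u(t)\|_{L^n}$, the sub-critical gradient bound of the Lemma immediately above, and a Gagliardo--Nirenberg (Morrey-type) interpolation that converts these into $L^\infty$-decay with the correct parabolic scaling. Throughout I fix once and for all some $\delta\in(\tfrac35,\tfrac23)$ and some $\alpha\in(0,\tfrac15)$, so that all the constants below depend only on $\lambda$ and $n$.

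First I would prove \eqref{eq:L^n_estimate}. For this I re-run the Duhamel argument of Lemma \ref{lemma:R_estimate}, but measuring $u(t)$ directly in $L^n$: the homogeneous contribution is controlled by $\|S(t)u(0)\|_{L^n}\le c\,\|u(0)\|_{L^n}$ via Lemma \ref{lemma:semigroup} with $p=q=n$, while for the Duhamel term in \eqref{eq:Duhamel} I use Lemma \ref{lemma:semigroup} to map $L^{n/3\delta}$, $L^{n/2\delta}$, $L^{n/(5\delta-2)}$ into $L^n$ and Lemma \ref{lemma:N_estimates} together with the definitions of $K$, $K'$, $R$ to estimate $\|f^{(j)}(s)\|$. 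Because $L^n$ is scaling critical, the three resulting convolution integrals $\int_0^t(t-s)^{-a_j}s^{-b_j}\,ds$ all have $a_j+b_j=1$ and hence equal pure constants, convergence at the endpoints being guaranteed by exactly the constraints on $\delta$ that already closed Lemma \ref{lemma:R_estimate} (notably $\delta>\tfrac35$). This gives
$$\|u(t)\|_{L^n}\le c\,\|u(0)\|_{L^n}+c\big(R(t)^3+R(t)^5\big),$$
and under $\|u(0)\|_{L^n}<\rho$ Lemma \ref{lemma:R_estimate} together with \eqref{eq:R_0} yields $R(t)\le 2R_0(t)\le c\,\|u(0)\|_{L^n}<c\rho$, so that for $\rho$ small enough the last two terms are absorbed and \eqref{eq:L^n_estimate} follows.

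Next, the Lemma above provides, after shrinking $\rho$ if necessary, $t^{(1+\alpha)/2}\|\nabla u(t)\|_{L^{n/(1-\alpha)}}\le c\,\|u(0)\|_{L^n}$ on $(0,T]$. Applying the interpolation inequality $\|v\|_{L^\infty}\le c\,\|v\|_{L^n}^{\alpha/(1+\alpha)}\|\nabla v\|_{L^{n/(1-\alpha)}}^{1/(1+\alpha)}$ (the Morrey-type estimate cited from \cite{Taylor_3}, p.\,9) to $v=u(t)$ and inserting \eqref{eq:L^n_estimate} and this gradient bound gives
$$\|u(t)\|_{L^\infty}\le c\,\|u(0)\|_{L^n}^{\frac{\alpha}{1+\alpha}}\Big(t^{-\frac{1+\alpha}{2}}\|u(0)\|_{L^n}\Big)^{\frac{1}{1+\alpha}}=\frac{c}{\sqrt t}\,\|u(0)\|_{L^n},$$
and adding this to \eqref{eq:L^n_estimate} produces $\sqrt t\,\|u(t)\|_{L^\infty}+\|u(t)\|_{L^n}\le c\,\|u(0)\|_{L^n}$ with $\rho$, $c$ depending only on $\lambda$, $n$, and crucially independent of the local existence time $T$. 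The whole argument is routine once Lemmas \ref{lemma:semigroup}--\ref{lemma:R_estimate} and the preceding Lemma are in hand; the only point that really needs care is the exponent bookkeeping in the first step, where one must check that the $\delta$-range that closed Lemma \ref{lemma:R_estimate} also makes the $L^n$ time integrals converge and returns the scaling-invariant power $t^{0}$. I do not expect any substantive obstacle beyond that.
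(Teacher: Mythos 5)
Your proposal follows precisely the route the paper takes: establish the $L^n$ bound \eqref{eq:L^n_estimate} by repeating the Duhamel iteration of Lemma \ref{lemma:R_estimate} with target space $L^n$ (the paper dispenses with this in one line, ``along the lines of the above argument''), combine it with the gradient bound $t^{(1+\alpha)/2}\|\nabla u(t)\|_{L^{n/(1-\alpha)}}\le c\,\|u(0)\|_{L^n}$ from the preceding lemma, and conclude via the same Morrey/Gagliardo--Nirenberg interpolation $\|u\|_{L^\infty}\le c\,\|u\|_{L^n}^{\alpha/(1+\alpha)}\|\nabla u\|_{L^{n/(1-\alpha)}}^{1/(1+\alpha)}$. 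Your exponent bookkeeping for the three convolution integrals ($a_j+b_j=1$, convergence for $\delta\in(\tfrac35,\tfrac23)$) is correct, and the absorption of $R(t)^3+R(t)^5$ via $R(t)\le 2R_0(t)\le c\,\|u(0)\|_{L^n}$ matches the paper's reasoning.
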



  
 \section{Proof of the Theorem}  
 
 \subsection{Initial data in $H^\infty _\ast(\R^n;\St)$}\label{subsection:smooth} Given a map $\m_0$ in this class, there exist,
 according to Proposition \ref{prop:local_classic}, a terminal time $T_\ast>0$ and a smooth solution 
 $$\m  \in C^0([0,T_\ast); H^\infty_\ast(\R^n))$$
 with $\m(0)=\m_0$.
 For $T \in (0,T_\ast)$ we obtain, by Proposition \ref{prop:covariant}, a solution 
 $$u \in C^0([0,T]; H^\infty(\R^n;\C^n))$$
 of the corresponding covariant complex Ginzburg-Landau equation under the Coulomb gauge.
Since $|\nabla \m|=|u|$ we have for $t \in [0,T]$
 \begin{equation} \label{eq:grad_bound}
  \| \nabla \m(t) \|_{L^\infty} \le \|u(t)\|_{L^\infty}
 \quad \text{while} \quad 
  \|u(0)\|_{L^n} \le \| \nabla \m_0 \|_{L^n}. 
   \end{equation}

 Assuming  $\|\nabla \m_0\|_{L^n} <\rho$, 
 Proposition \ref{prop:alpha} applies, and we obtain 
 \begin{equation}\label{eq:L_estimate}
  \norm{\nabla\m (t)}{L^\infty} \le \frac{c}{\sqrt{t}} \|\nabla \m_0\|_{L^n}  \quad \text{for all} \quad t \in (0,T)
 \end{equation}
where $c$ only depends on $\lambda$ and $n$.
Since $T \in (0,T_\ast)$ is arbitrary we deduce from Proposition \ref{prop:local_classic} that $\m$ can be extended to all times. 
In turn, \eqref{eq:L_estimate} holds true for any $t>0$. In conjunction with the energy inequality \eqref{eq:energy_inequality} 
we obtain
\begin{equation}\label{eq:decay} 
 \| \m(t) - \m_\infty \|_{L^\infty} \le c \,  \|\nabla \m(t)\|_{L^{\infty}}^\frac{n-2}{2} \|\nabla \m(t) \|_{L^2}^\frac{2}{n} \le 
 c \,  t^{-\frac{n-2}{2n}} \, \|\nabla \m_0\|_{L^n \cap L^2} \end{equation}
 for all $t>0$.
Hence $\m(t) \to \m_\infty$ in $C^1(\R^n)$ as $t \to \infty$.


\subsection{Initial data in $H^\sigma_\ast(\R^n;\St)$ for $\sigma> \frac{n}{2}+1$}
Given a map $\m_0$ in this class we find, according to Proposition \ref{prop:local_classic}, a terminal time $T_\ast>0$ and
a local solution 
$$\m \in C^0([0,T_\ast); H^\sigma_\ast(\R^n)).$$ 
In order to obtain a uniform bound for $\nabla \m$,
we shall use the result in Section \ref{subsection:smooth} in conjunction with the following approximation result, originally due to Schoen and Uhlenbeck, see \cite{Schoen-Uhlenbeck:83} and \cite{Struwe:88} Proposition 7.2.

 \begin{lemma} \label{lemma:S_U}Suppose $\m: \R^n \to \St$ is such that $\m-\m_\infty \in H^1 \cap W^{1,n}(\R^n)$. Then
 there exists a sequence of maps $\upper{\m}{k} \in  H^\infty_\ast(\R^n;\St)$ such that  
 $$\m-\upper{\m}{k} \to 0 \quad \text{in} \quad  H^1 \cap W^{1,n}(\R^n).$$
 If $\m \in H^\sigma_\ast(\R^n)$ for some $\sigma >1$ then $\{\upper{\m}{k}\}$ is uniformly bounded in $H^\sigma_\ast(\R^n)$.
 \end{lemma}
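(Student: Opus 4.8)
The plan is to use the mollify-and-project construction of Schoen and Uhlenbeck. Fix a standard mollifier $\rho_\eps(x)=\eps^{-n}\rho(x/\eps)$ with $\rho\ge 0$ smooth, compactly supported, $\int\rho=1$, and set
$$\m_\eps := \m_\infty + \rho_\eps \ast (\m - \m_\infty).$$
Since $\m-\m_\infty\in L^2\cap L^n$, standard mollification theory gives $\m_\eps-\m_\infty\in H^\sigma(\R^n)$ for every $\sigma\in\N$ (the Fourier multiplier of $\rho_\eps$ decays rapidly) together with $\m_\eps\to\m$ in $H^1\cap W^{1,n}(\R^n)$ as $\eps\searrow 0$, and $|\m_\eps-\m_\infty|\le 2$ pointwise. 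The first, and decisive, point is that $|\m_\eps|\to 1$ uniformly on $\R^n$; this is where the hypothesis $\m-\m_\infty\in W^{1,n}$ is genuinely used. By the Poincar\'e inequality the $L^1$ mean oscillation of $\m$ over a ball $B_\eps(x)$ is controlled by $\eta(\eps):=c\,\bigl(\sup_x\int_{B_\eps(x)}|\nabla\m|^n\bigr)^{1/n}$, which tends to $0$ as $\eps\to0$ by absolute continuity of $\int|\nabla\m|^n$; in other words $\m\in\mathrm{VMO}(\R^n)$. Using $|\m|\equiv1$ together with the triangle inequality $\big||(\m)_{B_\eps(x)}|-1\big|\le |(\m)_{B_\eps(x)}-\m(y)|$ for a.e.\ $y$ and averaging, one gets $\big||(\m)_{B_\eps(x)}|-1\big|\le\eta(\eps)$, and since $\m_\eps(x)$ is a weighted average of $\m$ over $B_\eps(x)$ the same bound (up to a constant) holds for $\big||\m_\eps(x)|-1\big|$. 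Hence $\tfrac12\le|\m_\eps|\le1$ everywhere once $\eps$ is small. I expect this $\mathrm{VMO}$ step to be the main obstacle; note that an $H^1$ bound alone would not suffice in dimension $n\ge3$.

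Given this, I would project. Let $\pi(v)=v/|v|$ and, for a sequence $\eps_k\searrow0$ so small that $\m_{\eps_k}$ takes values in the compact annulus $A=\{\tfrac12\le|v|\le1\}\subset\R^3\setminus\{0\}$, define $\upper{\m}{k}:=\pi(\m_{\eps_k})$. These are smooth $\St$-valued maps. Since $\pi$ is, near $A$, smooth with bounded derivatives of all orders and $\pi(\m_\infty)=\m_\infty$, the nonlinear composition (Moser-type) estimate in Sobolev spaces, e.g.\ \cite{Taylor_3}, applies to a harmless smooth extension of $\pi$ and yields $\upper{\m}{k}-\m_\infty=\pi(\m_{\eps_k})-\pi(\m_\infty)\in H^\sigma(\R^n)$ for every $\sigma$, with
$$\|\upper{\m}{k}-\m_\infty\|_{H^\sigma}\le C\bigl(\|\m_{\eps_k}-\m_\infty\|_{L^\infty}\bigr)\,\|\m_{\eps_k}-\m_\infty\|_{H^\sigma}.$$
In particular $\upper{\m}{k}\in H^\infty_\ast(\R^n;\St)$.

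For convergence, pass to a further subsequence with $\m_{\eps_k}\to\m$ a.e., and use $\m=\pi(\m)$ (valid since $|\m|\equiv1$) to write $\upper{\m}{k}-\m=\pi(\m_{\eps_k})-\pi(\m)$ and $\nabla\upper{\m}{k}-\nabla\m=D\pi(\m_{\eps_k})(\nabla\m_{\eps_k}-\nabla\m)+(D\pi(\m_{\eps_k})-D\pi(\m))\nabla\m$. The first two contributions tend to $0$ in $L^2\cap L^n$ because $\pi$ and $D\pi$ are Lipschitz, respectively bounded, on $A$ and $\m_{\eps_k}\to\m$ in $H^1\cap W^{1,n}$; the last contribution tends to $0$ in $L^2$ and $L^n$ by dominated convergence, since $D\pi(\m_{\eps_k})-D\pi(\m)\to0$ a.e.\ and stays bounded while $\nabla\m\in L^2\cap L^n$. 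This gives $\m-\upper{\m}{k}\to0$ in $H^1\cap W^{1,n}(\R^n)$. Finally, if in addition $\m-\m_\infty\in H^\sigma$ for some $\sigma>1$, then mollification is a contraction on $H^\sigma$, so $\|\m_{\eps_k}-\m_\infty\|_{H^\sigma}\le\|\m-\m_\infty\|_{H^\sigma}$, and (since $\|\m_{\eps_k}-\m_\infty\|_{L^\infty}\le2$ uniformly) the composition estimate above bounds $\|\upper{\m}{k}-\m_\infty\|_{H^\sigma}$ uniformly in $k$, which is the asserted uniform bound.
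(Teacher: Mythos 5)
Your proposal is correct and follows essentially the same Schoen--Uhlenbeck mollify-and-project construction as the paper: mollify $\m-\m_\infty$, use the $W^{1,n}$ hypothesis through a Poincar\'e/VMO bound to show $|\m_\eps|$ is uniformly close to $1$, project onto $\St$, and then invoke a Moser-type composition estimate to get membership and uniform bounds in $H^\sigma_\ast$. The only cosmetic difference is in the final convergence step: the paper shows $\limsup\|\nabla\m^{(k)}\|_{L^p}\le\|\nabla\m\|_{L^p}\le\liminf\|\nabla\m^{(k)}\|_{L^p}$ and concludes strong $L^p$ convergence from norm convergence plus weak convergence, whereas you expand $\nabla\pi(\m_{\eps_k})-\nabla\pi(\m)$ by the chain rule and use Lipschitz bounds together with dominated convergence; both are standard and equally valid.
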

 
 \begin{proof} With a standard mollifier $\varphi \in C^\infty_0(B_1(0))$ and $\varphi_\eps(x)=\eps^{-n} \varphi(x/\eps)$, we
 obtain, taking into account $\m - \m_\infty \in L^2(\R^n)$, 
 $$\varphi_\eps \ast \m - \m_\infty = \varphi_\eps \ast ( \m - \m_\infty)   \in H^\infty(\R^n).$$ 
Repeating the argument from \cite{Schoen-Uhlenbeck:83} yields for all $x \in \R^n$
\begin{equation}\label{eq:uniform_convergence}
\Big(1- |(\varphi_\eps \ast \m)(x)| \Big)^n= {\rm dist}\left( (\varphi_\eps \ast \m) (x),\St \right)^n \le c  \int_{B_\eps(x)} |\nabla \m|^n \, dx \to 0
\end{equation}
uniformly as $\eps \to 0$. For $\eps$ sufficiently small we have $\inf |\varphi_\eps \ast \m|>\frac{1}{2}$ and let 
 $$\upper{\m}{k}:=\frac{\varphi_\eps \ast \m}{|\varphi_\eps \ast \m|} \quad \text{for} \quad \eps =\eps_k \searrow 0.$$ 
We observe that  we can represent $\upper{\m}{k}-\m_\infty = F( \varphi_{\eps_k} \ast \m - \m_\infty)$ with a smooth function
$F:\R^3 \to \R^3$ from which we can assume $F(0)=0$ and which induces a bounded mapping on $L^\infty \cap H^\sigma(\R^n;\R^3)$
for every $\sigma \in \Z$,
see e.g. \cite{Taylor_3} p.11. Moreover,
$$ |\upper{\m}k-\m| \le |\upper{\m}k-\varphi_\eps \ast \m |+ |\varphi_{\eps_k} \ast \m - \m|  
\le 2\,  |\varphi_{\eps_k} \ast \m - \m|, $$
hence $\upper{\m}k-\m \to 0$ in $L^2(\R^n)$ as $k \to \infty$. Finally,
$ |\nabla \left( \varphi_{\eps} \ast \m \right) | \ge|\varphi_{\eps} \ast \m|\,  |\nabla \upper{\m}k|$
and therefore by \eqref{eq:uniform_convergence} we obtain, for $2 \le p \le n$, 
$$\limsup_{k \to \infty }  \| \nabla \upper{\m}k\|_{L^p} \le \lim_{k \to \infty} \|\varphi_{\eps_k} \ast \m \|_{L^p}
 = \|\m \|_{L^p} \le \liminf_{k \to \infty}  \| \nabla \upper{\m}k\|_{L^p}.$$
It follows that $ \upper{\m}k - \m \to 0$ in $H^1 \cap W^{1,n}(\R^n)$ as $k \to \infty$ as claimed.  
 \end{proof}

If we assume $\norm{\nabla \m_0}{L^n} < \rho$ 
with $\rho$ as in Proposition \ref{prop:alpha}, then,  for large $k$,  the same is true for a approximating sequence 
$\m^{(k)}_0$ of initial maps as in Lemma \ref{lemma:S_U}.
We obtain a corresponding sequence of global solutions $\m^{(k)} \in C^0([0,\infty);H^\infty_\ast(\R^n))$ such that 
 $$ \| \nabla \upper{\m}{k}(t)\|_{L^\infty} \le  \frac{c}{\sqrt{t}}   \| \nabla \upper{\m}{k}_0\|_{L^n}   \quad  \text{for all}  \quad t >0$$
and a universal constant $c$.
Passing to the limit $k \to \infty$ we see that by Lemma \ref{lemma:stability} 
$$\upper{\m}{k} \to \m \quad \text{in} \quad  L^\infty_{\rm loc}( (0,T_\ast);L^2(\R^n))$$ 
and by lower semicontinuity of norms 
$$\sup_{t \in (0,T_\ast)} \norm{\nabla \m (t)}{L^\infty} < \infty.$$
We infer that $\m$ persists for all times by virtue of Proposition \ref{prop:local_classic} and is unique in its class according to
Lemma \ref{lemma:stability}. Similarly, \eqref{eq:L_estimate} and \eqref{eq:decay} carry over to the limit.

\subsection{Weak initial data}
Given intitial data $\m_0 : \R^n \to \St$ such that
$$\m_0 - \m_\infty \in H^1 \cap W^{1,n}(\R^n;\R^3)
\quad \text{and} \quad \norm{\nabla \m_0}{L^n} < \rho$$ 
with $\rho$ as in Proposition \ref{prop:alpha}, there exists, by virtue of Lemma \ref{lemma:S_U}, a sequence 
of approximating initial maps $\m^{(k)}_0 \in H^\infty_\ast(\R^n)$ and, for large $k$, corresponding
global solutions $\m^{(k)} \in C^0([0,\infty);H^\infty_\ast(\R^n))$. 
If $c \rho < \eta$ with $\eta$ as in Lemma \ref{lemma:stability}  we deduce in conjunction with the energy inequality \eqref{eq:energy_inequality}
$$  \|  \upper{\m}{k}(t) - \m_\infty \|_{H^1} \le  \|  \upper{\m}{k}_0 - \m_\infty\|_{H^1}$$ 
and
$$ \int_0^\infty \left\| \dd{\upper{\m}{k}}{t} (t)\right\|^2_{L^2} \, dt \le \frac{1+\lambda^2}{\lambda}E(\upper{\m}{k}_0)$$
while from Proposition \ref{prop:alpha}
$$\sqrt{t} \, \| \nabla \m^{(k)}(t) \|_{L^\infty} + \| \nabla \m^{(k)}(t) \|_{L^n} \le c \; \| \nabla \m^{(k)}_0 \|_{L^n}$$
 and from \eqref{eq:decay}
$$  \| \upper{\m}{k}(t) - \m_\infty \|_{L^\infty} \le c \,  t^{-\frac{n-2}{2n}} \, \|\nabla \upper{\m}{k}_0\|_{L^n \cap L^2} $$
for all $t>0$ and with a univeral constant $c$.
Now it is easy to see that any weak limit $\m$ 
is a global weak solution of \eqref{eq:LLG2} such that the prescribed initial data $\m_0$ is continuusly attained in $L^2$ and 
such that all the above estimates are satisfied by $\m$. Hence, by Lemma \ref{lemma:stability}, the solutions $\m$ is
unique in its class.\\
We aim to prove uniform local bounds in spaces of  space-time H\"older continuous functions.
For this purpose we invoke the following localized energy inequality for Landau-Lifshitz-Gilbert, 
that can be found in \cite{Melcher:05} Lemma 2 or \cite{Moser:book} Lemma 5.10:
  \begin{lemma} Suppose $\m$ is a smooth solution of the Landau-Lifshitz-Gilbert equation 
 in a space-time cylinder
 $$ P_r(z_0) =  (t_0, t_0 + r^2) \times B_r(x_0) \quad \text{where} \quad z_0 = (t_0, x_0)$$
 then, for a constant $c$ that only depends on $\lambda$ and $n$,
 \begin{equation}\label{eq:energyinequality}
 \int_{B_{r/2}(x_0)} \left|\nabla \m\right|^2 \, dx
    +   \int_{P_{r/2}(z_0)} \left|\dd{\m}{t}\right|^2 \, dz
	  \le \frac{c}{r^2}  \; \int_{P_r(z_0)}\left|\nabla \m\right|^2 \, dz .
	  \end{equation}
\end{lemma}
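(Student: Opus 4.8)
The plan is to prove \eqref{eq:energyinequality} by the standard localized energy (Caccioppoli-type) argument, exploiting the algebraic structure of LLG displayed in \eqref{eq:LLG3}. Fix a spatial cutoff $\eta \in C_0^\infty(B_r(x_0))$ with $\eta \equiv 1$ on $B_{r/2}(x_0)$, $0 \le \eta \le 1$ and $|\nabla\eta| \le c/r$, and test the identity
$$\lambda\,\del_t\m + \m\times\del_t\m = (1+\lambda^2)\bigl(\Delta\m + |\nabla\m|^2\m\bigr)$$
with $\eta^2\,\del_t\m$, integrating over $B_r(x_0)$. On the left, $\scp{\m\times\del_t\m}{\del_t\m} = 0$ pointwise, so only $\lambda\int\eta^2|\del_t\m|^2\,dx$ survives; on the right, $\scp{\m}{\del_t\m} = \tfrac12\del_t|\m|^2 = 0$ annihilates the tension term $|\nabla\m|^2\m$, leaving $(1+\lambda^2)\int\eta^2\scp{\Delta\m}{\del_t\m}\,dx$.

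Next I would integrate by parts in space. Using $\nabla(\eta^2\del_t\m) = \eta^2\nabla\del_t\m + 2\eta\,\nabla\eta\otimes\del_t\m$ together with $\scp{\nabla\m}{\nabla\del_t\m} = \tfrac12\del_t|\nabla\m|^2$, one obtains
$$\lambda\int\eta^2|\del_t\m|^2\,dx + \frac{1+\lambda^2}{2}\,\frac{d}{dt}\int\eta^2|\nabla\m|^2\,dx = -\,2(1+\lambda^2)\int\eta\,\scp{(\nabla\eta\cdot\nabla)\m}{\del_t\m}\,dx.$$
Young's inequality bounds the right-hand side by $\tfrac\lambda2\int\eta^2|\del_t\m|^2\,dx + \tfrac{c}{r^2}\int_{B_r(x_0)}|\nabla\m|^2\,dx$, and absorbing the first term into the left yields the differential inequality
$$\frac\lambda2\int\eta^2|\del_t\m|^2\,dx + \frac{1+\lambda^2}{2}\,\frac{d}{dt}\int\eta^2|\nabla\m|^2\,dx \le \frac{c}{r^2}\int_{B_r(x_0)}|\nabla\m|^2\,dx.$$

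Finally I would integrate this in time. Integrating from $s$ to $\bar t$ for $t_0 < s < \bar t < t_0+r^2$, the boundary term $\int\eta^2|\nabla\m(s)|^2\,dx$ is converted, upon averaging $s$ over a slab of length comparable to $r^2$, into $\tfrac{c}{r^2}\int_{P_r(z_0)}|\nabla\m|^2\,dz$; combined with the time integral of the right-hand side, also bounded by $\tfrac{c}{r^2}\int_{P_r(z_0)}|\nabla\m|^2\,dz$, this simultaneously gives the spatial energy bound on $B_{r/2}(x_0)$ and the bound on $\int_{P_{r/2}(z_0)}|\del_t\m|^2\,dz$. The only genuinely delicate point is the bookkeeping in this last step — choosing the cutoffs (and, if a clean sub-cylinder is wanted, an auxiliary time cutoff) so that $P_{r/2}(z_0)$ appears and the contributions at the parabolic boundary of $P_r(z_0)$ are controlled solely by the energy over $P_r(z_0)$; the two orthogonality relations $\scp{\m\times\del_t\m}{\del_t\m}=0$ and $\scp{\m}{\del_t\m}=0$ are precisely what make the computation close with no lower-order remainder.
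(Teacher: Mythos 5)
Your argument is exactly the standard localized Caccioppoli computation one finds in the references the paper cites for this lemma (\cite{Melcher:05}, \cite{Moser:book}), and it is correct: the reformulation \eqref{eq:LLG3}, the two orthogonality relations $\scp{\m\times\del_t\m}{\del_t\m}=0$ and $\scp{\m}{\del_t\m}=0$, the spatial integration by parts, and the Young absorption are all the right steps and they do close with no lower-order remainder. The one thing I would state explicitly rather than bury under ``bookkeeping'': your averaging-over-$s$ step necessarily pushes the lower time limit of the $\lvert\del_t\m\rvert^2$ integral strictly above $t_0$, so the sub-cylinder you actually control is anchored at the \emph{top} of $P_r(z_0)$. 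That is in fact the correct conclusion --- no Caccioppoli estimate can hold for a sub-cylinder sharing the initial-time face $\{t_0\}\times B_{r/2}(x_0)$, as one already sees for the linear heat equation with highly oscillatory smooth data, where $\nabla u$ may decay instantly while $\del_t u$ is large near $t=t_0$ --- and with the paper's forward-in-time convention $P_r(z_0)=(t_0,t_0+r^2)\times B_r(x_0)$ the symbol $P_{r/2}(z_0)$ in the Lemma must be read as that top-aligned sub-cylinder (equivalently, one works with backward parabolic cylinders $(t_0-r^2,t_0)\times B_r(x_0)$, where your slab of $s$-values sits naturally below the shared top face). With that reading your proof is complete.
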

Given any compact subset $Q  \subset (0,\infty) \times \R^n$ there exists a constant $c$ that only depends on $\| \nabla \m^{(k)} \|_{L^\infty(Q)}$ such that, for all $P_{r}(z_0) \subset Q$
 $$ \int_{P_{r/2}(z_0)}\left(  |\nabla \m^{(k)}|^2 + r^2 \, \left|\dd{\m^{(k)}}{t}\right|^2 \right) \, dz
   \le c \, r^{2+n}$$
 and hence, by the parabolic version of Morrey's lemma, see \cite{Chen_Lin:93}, a uniform H\"older bound
 of $\{\upper{\m}{k}\}$ locally in $Q$. Hence $\m$ is locally H\"older continuous with locally bounded gradient. But then the linear theory of parabolic systems as e.g. in \cite{Lady}
 and a bootstrap argument implies $\m$ is smooth in $(0,\infty) \times \R^n$. \\
Finally, we show that initial data is strongly attained in $H^1 \cap W^{1,n}(\R^n)$. Indeed, by lower semicontinuity of norms
 and the strong convergence of initial maps
 \begin{eqnarray*}
 \norm{ \m_0-\m_\infty}{}  &\le&  \limsup_{t \searrow 0} \norm{ \m(t)-\m_\infty} {} 
\le \limsup_{t \searrow 0} \liminf_{k \to \infty} \| \upper{\m}{k}(t)-\m_\infty\| \\
&\le& \limsup_{k \to \infty} \|\upper{\m}{k}_0-\m_\infty \| = \| \m_0-\m_\infty \| 
\end{eqnarray*}
where $\|\cdot\|$ is the $H^1 \cap W^{1,n}$ norm.
This completes the proof of the Theorem.
\bibliographystyle{acm}
\bibliography{cm}
   
\end{document}